\def\eqref#1{equation~\ref{#1}}
\def\1{\bm{1}}
\DeclareMathAlphabet{\mathsfit}{\encodingdefault}{\sfdefault}{m}{sl}
\SetMathAlphabet{\mathsfit}{bold}{\encodingdefault}{\sfdefault}{bx}{n}
\newcommand{\EE}{\mathbb{E}}
\newcommand{\RR}{\mathbb{R}}
\newcommand{\FF}{\mathcal{F}}
\newcommand{\GG}{\mathcal{G}}
\newcommand{\OO}{\mathcal{O}}
\newcommand{\e}{\varepsilon}
\def\<#1,#2>{\left\langle #1,#2 \right\rangle}
\definecolor{bgcolor2}{rgb}{0.8,1,1}
\def\<#1,#2>{\left\langle #1,#2 \right\rangle}
\begin{document}
\begin{mainpart}

\section{Introduction}
\label{sec:intro}
In this paper, we are interested in the unconstrained optimization problem
\begin{equation}
\label{eq:main}
\min_{x \in \RR^n} f(x).
\end{equation}
Some of the most commonly used methods to solve such problems are gradient-based methods \cite{polyak1963gradient, nesterov1983method, frank1956algorithm}. The main reason for this is its low iteration time and established convergence guarantees. However, sometimes rates for these first-order methods might be rather slow for solving modern optimization problems. These problems involve not only strongly convex and convex problems, but also more practical and general nonconvex ones. For instance, as it was shown in \cite{zhang2024transformers}, first-order methods perform poorly on tasks with the "heterogeneous" Hessian structure. 

This suggests that in order to speed up convergence we need to use more detailed information about the objective function. As expected, more information can increase the rates significantly. The simplest algorithm that use the second derivative of the function is Newton's method. Nevertheless, it also faces some obstacles, for instance, it does not converge globally \cite{nesterov2018lectures} and is compatible only with strongly convex problems, which is a huge drawback while working with practical applications.

There are several ways to get rid of these issues: adding a certain step size \cite{kantorovich1948newton}, applying cubic regularization \cite{nesterov2006cubic}, using adaptive quadratic regularization \cite{doikov2024gradient} and others. But all these methods as well as classical Newton's method heavily rely on the Hessian computing, which is time and resource consuming. As an answer to that quasi-second-order algorithms \cite{nocedal1999numerical} are used, where the Hessian is approximated with less resources spent. Besides, Adam \cite{kingma2014adam} can be regarded to as a Newton-like method with the Hessian approximation. Being one of the most used machine learning's optimizers, Adam proposes an idea, that high-order methods, where derivatives are approximated inexactly and cheaply, achieve great results for well demanded applications. This proposal is also proven by AdaHessian \cite{yao2021adahessian}, SOPHIA \cite{liu2024sophiascalablestochasticsecondorder} and other methods.

Moreover, exploring high-order methods for solving problems in a more general statement is an important theoretical aspect. In particular, we are interested in deriving convergence guarantees not only for convex problems, that are thoroughly studied \cite{tyrrell1970convex}, but also for nonconvex ones, that are frequently encountered in practice \cite{goodfellow2016deep}. Another way to generalize problems' statements is to change the way we measure distance between two points. Well-established approach to this is introducing the Bregman divergences \cite{liu2011total} as a substitute to the usual Euclidean distance. This proposal tends to perform well on problems with non-trivial inner structure and geometry.

\section{Our contribution}
\label{sec:our_contr}
As mentioned above, Newton's method is a powerful one, but not applicable to a wide class of functions. Various modifications of this algorithm have been proposed to deal with this drawback. For instance, in \cite{agafonov2024inexact} authors implement the method with inexact derivatives. However, they use cubic regularization, that makes the iteration cost incredibly high. This issue is addressed in \cite{doikov2024gradient}, where authors propose a proper quadratic regularization generalized by usage of the Bregman divergences. Although, this method cannot be implemented with inexact Hessians. In \cite{doikov2024spectral}, they deal with this drawback, but none of the mentioned methods have convergence guarantees for nonconvex problems. Therefore, in \cite{jiang2023universal} authors cover this case, but their algorithm lack generality, as it cannot be applied with Hessian approximations and any Bregman divergence.

As it can be noted, none of the existing works cover all forms of this generalization, hence our contribution is in closing this gap and proposing a novel algorithm \textbf{HAT} -- \textbf{H}essian \textbf{A}daptive \textbf{T}rust region method, that
\begin{enumerate}
    \item Uses proper regularization term, that doesn't increase the iteration cost,
    \item Can be applied with Hessian approximations,
    \item Is compatible with different Bregman divergences for arbitrary problem structures, 
    \item Is applicable to nonconvex problems, as most of modern problems are so.
    \end{enumerate}
    Comparison with the existing methods can be seen in the Table \ref{table:method_comparison}. When \cite{agafonov2024inexact, doikov2024gradient} provide the best known rates for the convex problems and \cite{doikov2024spectral} for the nonconvex ones, \cite{jiang2023universal} and our algorithm achieve these rates for both nonconvex and convex settings.
    
    Another contribution is introducing the concept of \textit{relative inexactness}, that depends simultaneously on the Hessian approximation quality in terms of the gradient's norm. We investigate more general bound on second-order approximations: $\|\nabla^2 f(x_k) - H_k\| \leq M\|\nabla f(x_k)\|^{\beta}$ instead of $\|\nabla^2 f(x_k) - H_k\| \leq \delta$ \cite{agafonov2024inexact, doikov2024spectral}. This allows us to analyze more Hessian estimators, that increases the generality of the proposed algorithm. We experimentally show, that this bound takes place for most well-used approximations.

\begin{table*}[]
    \begin{center}
    \begin{tabular}{|c|c|c|c|c|c|c|}
    \hline
    \textbf{Reference} & \textbf{Best Rates}&\textbf{Reg} & \textbf{Hk} & \textbf{BD} & \textbf{nCVX} &\textbf{CVX}
    
    \\\hline
    \cite{agafonov2024inexact} & \ding{51} &  \ding{55}  & \ding{51} & \ding{55} & \ding{55} &\ding{51}
    \\\hline
    \cite{doikov2024gradient} &\ding{51} & \ding{51} &\ding{55} &\ding{51} &\ding{55} & \ding{51}
    \\\hline
    \cite{doikov2024spectral} &\ding{51}& \ding{51}& \ding{51}& \ding{55}& \ding{51} &\ding{55}
    \\\hline
    \cite{jiang2023universal} &\ding{51}& \ding{51}& \ding{55}& \ding{55}& \ding{51} &\ding{51}
    \\\hline
    \cellcolor{green!25}{This paper} &\cellcolor{green!25}{\ding{51}}& \cellcolor{green!25}{\ding{51}}& \cellcolor{green!25}{\ding{51}}& \cellcolor{green!25}{\ding{51}}& \cellcolor{green!25}{\ding{51}} & \cellcolor{green!25}{\ding{51}}
    \\\hline
    \end{tabular}
    \caption{Comparison of the existing state-of-the-art results for second-order methods. Notation: Best Rates stands for the best known convergence rates for the appropriate method classes, Reg stands for quadratic regularization, Hk stands for applicablity with Hessian approximations, BD stands for compatibility with Bregman divergences, nCVX stands for convergence guarantees for nonconvex problems and CVX stands for convex ones.}  
    \label{table:method_comparison}
    \end{center}
\end{table*}

\section{Family of second-order methods}\label{sec:family_methods}
In the introduction above, we mentioned the most similar works to ours, where convergence guaranteed are derived for Newton-based methods. Although, many of widely used methods with Hessian approximations, that rely on various heuristics. Algorithms with these estimations do not always have proper theoretical convergence bounds. Nevertheless, derivatives' approximations in these methods tend to approximate the exact Hessian quite well, therefore, \textbf{HAT} may be implemented with these estimations. Below we mention as established Hessian approximations as well as novel ones, that show great performance at modern application problems. 

\subsection{Quasi-Newton methods}\label{sec:quasi_newton}

\textbf{DFP} \cite{Davidon1959VariableMM,Fletcher1988PracticalMO}\textbf{.} One of the first heuristics for the true Hessian approximation is acquired from the quadratic model of objective function. Instead of $\nabla^2 f(x_{k+1})$ the matrix $B_{k+1}$ is used, which is obtained by matching the gradient of this model and the real function we gain the secant equation: $B_{k+1}(x_{k+1} - x_k) = \nabla f(x_{k+1}) - \nabla f(x_{k})$. As the solution might not be unique, we choose the least changing matrix from the previous iteration in Frobenius norm.

\textbf{BFGS} \cite{nocedal1999numerical}\textbf{.} The DFP updating formula is effective, but it is superseded by the BFGS formula, that operates with inverse approximate Hessian $H_{k} = B_{k}^{-1}$. It is obtained as the previous one via the secant equation $H_{k+1}(\nabla f(x_{k+1}) - \nabla f(x_k)) = x_{k+1} - x_k$. To derive a unique solution we use the same intuition as above. It should be noted, that we can derive $B_{k+1}$ from $B_k$ from the Sherman-Morrison-Woodburry formula, hence it's not resource-consuming.

\textbf{SR-1} \cite{Conn1991ConvergenceOQ}\textbf{.} In DFP and BFGS, new matrix differs from the previous by a 2-rank one. We can simplify this, by using the Symmetric-Rank-1 update. As usual, we use a secant equation. SR-1 update does not guarantee a positive definiteness result, therefore it might perform better for some nonconvex problems.

\textbf{Limited memory methods.} Dealing with high dimensions problems, keeping the whole "Hessian" might consume a lot of memory. One of the possible solutions is to store low dimension representations of the matrix, that allow us to reconstruct the approximation. For instance, L-BFGS \cite{Liu1989OnTL} method is the limited memory edition of the BFGS algorithm. It reconstructs the BFGS matrix with the finite number of vectors. This method is competitive to others and is used in many optimizers.

\subsection{Scaling methods}\label{sec:scaling_methods}

However, as the Quasi-Newton methods are based on a secant equation, they imitate the Hessian action on a gradient too good, hence models end up in a local minimum too soon \cite{dauphin2014identifying}. And as neural networks have enormous number of local minimums due to its nonconvex structure, these methods end up far from the optimal point. All of the above suggests that we should not use such tight Hessian approximations, but rather more general ones

\textbf{Fisher} \cite{Amari1998NaturalGW,Amari2021InformationG}\textbf{.} Another way to approximate the Hessian matrix comes from the probabilistic nature of data.  If each input $a$ has probability of being mapped into label $b$, the goal is to minimize the distance between the target joined distribution $p_x(a,b)$ and a joined distribution from the model $\widehat{p}(a,b)$. Nevertheless, computing Hessian is still time and resource consuming. Statistics tells us, that if these distributions matches, we can prove that the Hessian matrix is equivalent to the Fisher matrix $F = \EE\left[-\nabla_x^2 \log p_x(a,b)\right]$, which is equal to $\EE\left[\nabla_x \log p_x(a,b)\nabla_x \log p_x(a,b)^T\right]$. As the last matrix is significantly easier to compute, we can use it when the model has high accuracy.

\textbf{Diagonal Hessian.} Experiments show, that for MLPs and transformers Hessians' structure is not homogeneous. 
In fact, they consist of distinct blocks \cite{singh2020woodfisherefficientsecondorderapproximation,zhang2024adamminiusefewerlearning}, that impact the convergence process differently. Therefore, knowing the neural network's architecture and problem statement we can predict the certain derivatives that have the biggest impact on the convergence and use not the whole hessian, but only them instead. If these blocks are located on the matrix's main diagonal or they do not intersect we might use approximations for them. Thus, we might abuse the block structure of the Hessian to perform significantly less computations.

\textbf{Adagrad} \cite{duchi2011adaptive}\textbf{.} Based on a diagonal representation of the Hessian, the Adagrad method does not use any second order information about the function, but use gradients' outer product to imitate the Hessian. To prevent aggressive, monotonically decreasing rate, the momentum can be added, hence we keep the information about the previous iterations as  in \textbf{RMSProp} \cite{tieleman2017divide}.These momentum-like methods tend to perform well on practical applications, as they can avoid local minimums and do not demand high complexity cost. Similar approach can be found in \textbf{Adam} \cite{kingma2014adam}.

\textbf{OASIS} \cite{jahani2021doublyadaptivescaledalgorithm}\textbf{.} Taking into account not only the diagonal structure of the Hessian and momentum, but also the changing loss surface curvature across different dimensions we might get the OASIS-like algorithms. Experiments show, that truncating preconditioned matrices leads to a better search direction and adaptive step sizes help to achieve better convergence rates. This approach is continued by the method \textbf{SOPHIA} \cite{liu2024sophiascalablestochasticsecondorder}, that performs better for transformers.

\textbf{K-FAC} \cite{martens2015optimizing}\textbf{.} The use of the Fisher information matrix as the Hessian approximation appears to be beneficial in the neural network training process. However, computational costs may not be worth the benefit gained. In answer to this, a special preconditioner for neural networks is proposed. To approximate the Fisher information matrix, which is neither low-rank nor diagonal nor block-diagonal, the K-FAC preconditioner is used. This approximation takes into account the layer structure of the neural network and uses the Kronecker product to combine layers into blocks.

\textbf{Shampoo} \cite{gupta2018shampoo}\textbf{.} When K-FAC is mostly used for generative models, another preconditioning technique is introduced, that is applicable for stochastic optimization over tensor spaces. With quite simple recursive update, this method performs well on most problems. This approach was further investigated in \textbf{SOAP} \cite{vyas2024soap}, where the authors added a regularization factor.

\subsection{Engineers' tricks}\label{sec:engineers_tricks}

Not using the whole Hessian might as well be caused by technical constraints. Therefore, some methods are used to reduce the number of computations significantly. Surprisingly, some of them do not perform much worse than exact computations and also have theoretical convergence bounds that are asymptotically the same as for the second-order methods.

\textbf{Lazy computations} \cite{doikov2023second}\textbf{.} Converging to the optimum, step sizes diminish and gradients and Hessians change insignificantly. That's how the idea of keeping a previously computed Hessian appeals. We call this lazy Hessian updates. Reusing the old Hessians can be beneficial not only during the last iterations of the algorithm, but also during the whole process, as studies show. The idea of lazy computations is applicable to the Hessian approximations as well, hence many various methods can be developed and investigated. This allows us to decrease the computations by many times and not lose the convergence process. We may find a trade-off between the frequency of updates and convergence properties.

\textbf{Compressed Hessians.} Dealing with distributed optimization, we need to transmit optimizers' parameters, gradients or high-order derivatives. Due either to technical constraints or to cost of communications nodes may receive not the whole Hessian, but its compressed version. In various studies, different compression techniques were examined that show convergence results comparable to exact methods but use significantly fewer bits to transmit \cite{islamov2021distributed, safaryan2021fednl}. As most compressors are characterized by the norm difference between the matrix and its compressed version, they are applicable to our method and can be combined with other methods.

\section{Adaptive regularization}\label{sec:adapt_reg}
\textbf{Notation.} We use the standard Euclidean norm for vectors: $\|x\| := \<x,x>^{1/2},$ $x\in \RR^n$, and corresponding operator norm on matrices: $\|A\| := \max_{x: \|x\|\leq 1}\|Ax\|$. The objective functional $f : \RR^n \rightarrow \RR$ is a twice differentiable function. We denote its global minimum $f_* := \inf_x f(x)$ which we assume finite and may be not unique. If it is unique, we denote $x_* := \arg\min_x f(x)$.  We also introduce the gradient of $f$ at point $x$ as $\nabla f(x)\in \RR^n$, and the Hessian matrix by $\nabla^2 f(x)\in \RR^{n \times n}.$ 

In order to formalize the convergence process of the proposed algorithm, we introduce several assumptions. 

\begin{assumption}\label{as:conv}
The function $f: \RR^n \rightarrow \RR~$ is convex, i.e.
\[f(y) \geq f(x) + \<\nabla f(x), y-x>,~~~ \forall x,y\in\RR^n.\]
\end{assumption}

We work with both convex and nonconvex functions, hence we derive theory for these two cases, and this assumption is not always satisfied. In nonconvex optimization, our goal is to find $\e$-approximate first-order point $x$, ensuring $\|\nabla f(x)\| \leq \e$. In convex optimization, we search for $\e$-approximate zero-order point $x$, ensuring $f(x) - f_* \leq \e$. Further assumptions are widely used in the optimization theory, especially among high-order ones. 

\begin{assumption}\label{as:heslip}
    The function $f: \RR^n \rightarrow \RR~$ has $L_2$-Lipschitz Hessian, i.e.
    \[\left\|\nabla^2 f(x) - \nabla^2 f(y)\right\| \leq L_2\|x-y\|,~~~ \forall x,y\in\RR^n\]
\end{assumption}

\begin{lemma}\label{lem:nest}\cite{nesterov2021implementable}
    Let Assumption \ref{as:heslip} be satisfied, then $\forall x,y\in\RR^n$:
    \begin{equation*}
    \begin{split}
    \left\|\nabla f(y) - \nabla f(x) - \nabla^2f(x)(y-x)\right\|\leq \frac{L_2}{2}\|y-x\|^2,
    \end{split}
    \end{equation*}
    \begin{equation*}
    \begin{split}
    &\Big|f(y) - f(x) - \<\nabla f(x),y-x>-\frac{1}{2}\<\nabla^2 f(x)(y-x),y-x>\Big|\leq \frac{L_2}{6}\|y-x\|^3.
    \end{split}
    \end{equation*}
\end{lemma}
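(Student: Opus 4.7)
The plan is to establish both inequalities by applying the fundamental theorem of calculus along the segment from $x$ to $y$, using Assumption \ref{as:heslip} as the controlling bound on how fast the second derivative changes.

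For the first inequality I would start from the vector-valued identity
\[
\nabla f(y) - \nabla f(x) = \int_0^1 \nabla^2 f\bigl(x + t(y-x)\bigr)(y-x)\, dt,
\]
and subtract $\nabla^2 f(x)(y-x) = \int_0^1 \nabla^2 f(x)(y-x)\, dt$ under the integral sign. This produces
\[
\nabla f(y) - \nabla f(x) - \nabla^2 f(x)(y-x) = \int_0^1 \bigl[\nabla^2 f(x+t(y-x)) - \nabla^2 f(x)\bigr](y-x)\, dt.
\]
Applying the triangle inequality for the Bochner integral, the submultiplicativity of the operator norm, and then Assumption \ref{as:heslip} to bound $\|\nabla^2 f(x+t(y-x)) - \nabla^2 f(x)\| \leq L_2 t \|y-x\|$, the result follows from $\int_0^1 t\, dt = 1/2$.

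For the second inequality I would use the fundamental theorem of calculus one more level up: write
\[
f(y) - f(x) = \int_0^1 \<\nabla f(x + t(y-x)), y-x>\, dt,
\]
and split off the affine and quadratic Taylor terms as
\[
\<\nabla f(x), y-x> = \int_0^1 \<\nabla f(x), y-x>\, dt, \qquad \tfrac{1}{2}\<\nabla^2 f(x)(y-x), y-x> = \int_0^1 t\<\nabla^2 f(x)(y-x), y-x>\, dt.
\]
After subtracting these from $f(y)-f(x)$, the remainder under the integral is $\<\nabla f(x + t(y-x)) - \nabla f(x) - t\nabla^2 f(x)(y-x),\ y-x>$. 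Applying Cauchy--Schwarz and invoking the first inequality (with $y$ replaced by $x + t(y-x)$, yielding the bound $\tfrac{L_2}{2}t^2\|y-x\|^2$) reduces the problem to evaluating $\int_0^1 t^2\, dt = 1/3$, which gives the claimed constant $L_2/6$.

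The only non-routine step is the second part's bookkeeping: one must verify that the first inequality really is applicable inside the integrand with $t$-scaling, and that the resulting exponents and fractions combine to $L_2/6$ rather than $L_2/2$ or $L_2/3$. Once the Taylor rearrangement is written out cleanly, everything else is a direct application of Assumption \ref{as:heslip} and standard integral estimates.
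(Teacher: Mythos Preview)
Your argument is correct and is the standard derivation of these two bounds from a Lipschitz Hessian; the paper does not supply its own proof of this lemma but simply cites \cite{nesterov2021implementable}, so there is nothing to compare against beyond noting that your integral-remainder approach is exactly the classical one used in that reference.
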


In the convex case, we need to work with sublevel set of function $f$:
$$L:= \{x~:~f(x) < f(x_0)\}.$$

\begin{assumption}\label{as:sublevel}
    The function $f:\RR^n\rightarrow\RR$ has a finite sublevel set, i.e. 
    \[D := \sup \{\|x-x_*\|~:~ f(x) < f(x_0)\}< \infty.\]
    Since the set is bounded, the gradient's norm on this set is also bounded by some constant: $\|\nabla f(x)\| \leq G,~~~ \forall x \in L.$
\end{assumption}

Considered optimization schemes are based on some scaling function $\rho$, which we assume to satisfy several properties. Based on this function $\rho$, we define the Bregman divergence $V(x,y) := \rho(y) - \rho(x) - \<\nabla \rho(x),y-x>$, which is used to further generalize the Euclidean distance between points and to take advantage of the problem's geometry.

\begin{assumption}\label{as:rho}
    The scaling function $\rho$ is $\sigma_V$-strongly convex and has $L_V$-Lipschitz gradient, i.e.
    \[\rho(y) \geq \rho(x) + \<\nabla \rho(x),y-x> + \frac{\sigma_V}{2}\|y-x\|^2,~~~ \forall x,y \in \RR^n,\]
    \[\|\nabla \rho(y) - \nabla \rho(x)\| \leq L_V\|y-x\|, ~~~\forall x,y\in\RR^n.\]
    Moreover, we assume $2\sigma_V > L_V.$
\end{assumption}

In literature many examples of the Bregman divergences, satisfying these criteria, can be found.

\begin{example}\cite{doikov2024gradient}
Let $B$ be a positive definite symmetric matrix with eigenvalues lying in $[\lambda_{min}, \lambda_{max}]$, then with  $\rho(x) = \frac{1}{2}\<Bx,x>$ as a scaling matrix we get Bregman divergence
$$V(x,y) = \frac{1}{2}\<B(y-x),y-x>.$$
This Bregman divergence satisfy Assumption $\ref{as:rho}$ with constants $L_V = \lambda_{max}$ and $\sigma_V = \lambda_{min}$. 
\end{example}

\begin{example}\cite{doikov2024gradient}
    Solving a problem on a simplex
    
    {\small\begin{equation*}
         C_n := \left\{x \in \RR^n~:~ \sum\limits_{i=1}^n x^{(i)} = 1,~\forall~ i \in [1,n] ~~x^{(i)} \geq 0\right\},
    \end{equation*}}one of the most suitable norm is $\ell_1$-norm, defined as $\|x\|_1 := \sum_{i=1}^n |x^{(i)}|$ for $x \in \RR^n$, where $x^{(i)}$ is the $i$-th coordinate of $x$. Let $B$ be a positive definite symmetric matrix with eigenvalues lying in $[\lambda_{min}, \lambda_{max}]$  and $\theta > 0$. Then we introduce the scaling function
    \begin{equation*}
        \rho(x) = \frac{1}{2}\<Bx,x> + \theta\sum\limits_{i=1}^n(x^{(i)} + \theta)\ln(x^{(i)} + \theta).
    \end{equation*}
    Constructing the Bregman divergence from this scaling function, we satisfy Assumption \ref{as:rho} with $L_V = \lambda_{max} + 1$ and $\sigma_V = \lambda_{min} + \frac{\theta}{1+n\theta}$.
\end{example}

\subsection{Proposed algorithm}\label{sec:proposed_algo}

Here we present the motivation for our new \textbf{HAT} algorithm. As in other optimization schemes \cite{nesterov2018lectures}, we construct a model of an objective functional and proceed to minimize it. For these needs, Taylor's expansion is frequently used. Initially, a cubic regularization is introduced in \cite{nesterov2006cubic}:
\begin{equation*}
    x_{k+1} = \arg\min\limits_{y \in \RR^n} \<\nabla f(x_k), y-x_{k}> +\frac{1}{2}\<\nabla^2 f(x_{k})(y-x_{k}),y-x_{k} > + \frac{L_2}{6}\|y-x_{k}\|^3,
\end{equation*}
but its high iteration cost turns out to be unreasonably high, hence various ways to avoid it are suggested. Eventually, quadratic regularization and more general one with Bregman divergences is proposed in \cite{doikov2024gradient}:

{
\begin{equation*}
    x_{k+1} = \arg\min\limits_{y \in \RR^n} \<\nabla f(x_k), y-x_{k}> +\frac{1}{2}\<\nabla^2 f(x_{k})(y-x_{k}),y-x_{k} > +\frac{1}{\sigma}\sqrt{\frac{L_2\|\nabla f(x_k)\|}{3}}V(x_k,y).
\end{equation*}}

Although, their analysis is not applicable to the nonconvex functions, therefore the idea is to add trust region constraints to the method. In \cite{jiang2023universal} the authors implemented this, but only in the  simplest Euclidean case, where $V(x,y) = \tfrac{1}{2}\|y-x\|^2$:
\begin{equation*}
    \begin{split}
    x_{k+1} = &\arg\min\limits_{y \in \RR^n} \<\nabla f(x_k), y-x_{k}> +\frac{1}{2}\<\nabla^2 f(x_{k})(y-x_{k}),y-x_{k} > + \frac{L_2}{9}\|y-x_{k}\|^2\\
    \text{s.t.} ~ &\|y-x_k\|^2 \leq \frac{1}{9L_2} \|\nabla f(x_k)\|.
    \end{split}
\end{equation*}
We connect these methods with the opportunity to use not the true Hessian but its approximation instead and obtain the following algorithm:

\begin{algorithm}{\texttt{HAT}}\label{alg:alg1}
            \textbf{Input}: initial $x_0$, hyperparameters $\eta > 0,~ 0 < \xi < 1$
            \begin{algorithmic}
            \For {$k = 0, 1, \ldots$} 
            \State Compute Hessian approximation $H_k$
            \State Choose ($r_k, A_k$) using $\xi,\eta$
            \State Find $d_k \in \RR^n$ such that
            \begin{equation}\label{alg:subproblem}
        \begin{split}d_k = \arg\min\limits_{d \in \RR^n} ~ &\<\nabla f(x_k), d> + \frac{1}{2}\< H_kd,d > + A_k V(x_k, x_k+d)\\
        \text{s.t.} ~ &\|d\|^2 \leq r_k^2 \|\nabla f(x_k)\|
        \end{split}
\end{equation}
            \State Update $x_{k+1} = x_k + d_k$
            \EndFor
            \end{algorithmic}
\end{algorithm}

Since the subproblem (\ref{alg:subproblem}) is a quadratic minimization problem with quadratic constraints, one can apply as  projection methods, as well as special solvers to find $d_k$ in (\ref{alg:subproblem}). This choice does not have a huge impact on the convergence rates.

The main difficulty is in the choice of the parameters $r_k$ and $A_k$ -- chosen poorly, there can be no convergence at all. If $A_k$ is excessively big, the method becomes more similar to Gradient Descent, or Mirror Descent, depending on the Bregman divergence. If this parameter is too small, algorithm act as the Newton's method, that lacks global convergence. The parameter $r_k$ also influence the rate of convergence. If it is small, rates are low, as we barely move towards the optimum. When $r_k$ is too big, on the other way, we end up with an inaccurate function's model, especially for nonconvex problems or poor Hessian approximations, hence our function estimation does not look like the objective one whatsoever. Therefore, we do not converge at all.

Further we briefly explain the proof's idea. Its full version with all statements proven can be seen in Appendix \ref{sec:appendix}.

All the iterations of Algorithm \ref{alg:alg1} can be divided into two groups: ones, where constraint's inequality in (\ref{alg:subproblem}) is met and $\|d_k\|^2 = r_k^2\|\nabla f(x_k)\|,$ and others, where this inequality is strict, therefore $\|d_k\|^2 < r_k^2\|\nabla f(x_k)\|$. We investigate this two groups independently, and from bounds on both these groups' size obtain the final estimation on iterations, needed to converge to the $\e$-stationary first- or zero-order point. 

If constraints result in the strict inequality, we demand a decrease in the gradient's norm: $\|\nabla f(x_k+d_k)\| < \|\nabla f(x_k)\|$ (see Lemma \ref{lem:grad_decr}). This helps us to control the norm of the gradient, hence it converges to zero. However, sometimes nonconvex structure of the problem does not allow us to keep constraints as strict inequalities and we obtain $d_k$ on the boundary of the feasible set. In this case, we cannot control the gradient's norm properly, therefore it might drastically increase. However, we demand in this case the sufficient decrease in the function's value. Previous works \cite{doikov2024gradient, doikov2024spectral, jiang2023universal} show that this decrease should be proportional to some degree of the gradient's norm: $f(x_k+d_k) - f(x_k) \leq -\|\nabla f(x_k)\|^{3/2}$ (see Lemma \ref{lem:val_decr}).

In order to relax these strict conditions on decrease in either the gradient's norm or the function's value, hyperparameters $\eta$ and $\xi$ are introduced. The Hyperparameter $\eta > 0$ influence the rates of the convergence in the case the subproblem's (\ref{alg:subproblem}) solution is on the boundary of the feasible set:
\begin{equation}\label{eq:iter1}
    f(x_k+d_k) - f(x_k) \leq -\eta\|\nabla f(x_k)\|^{3/2}, ~~~\eta > 0.
\end{equation}
The Hyperparameter $\xi$ has an impact on the rates on convergence in the case the subproblem's (\ref{alg:subproblem}) solution is inside the feasible set:
\begin{equation}\label{eq:iter2}
    \|\nabla f(x_k+d_k)\| \leq \xi\|\nabla f(x_k)\|, ~~~\xi\in(0,1).
\end{equation}

It turns out, that we cannot perform infinitely many iterations with sufficient decrease in the value. Our next step is to bound the number of iterations with decrease in gradient's norm in a row, which is possible, since gradient's norm declines linearly. Therefore, we can perform finite number of iterations with sufficient decrease in the gradient's norm between two iterations with sufficient decrease in the value. Hence, by multiplying these bounds, we obtain a higher bound on number iterations before converging to a $\e$-stationary first-order point. In the convex case, bounds are stricter, since we have more information about the function.

Discussion above about constraint's inequality may be formalized via the Karush-Kuhn-Tucker conditions, that can be found in Appendix \ref{sec:appendix}. Finally, we can prove the following theorem:
\begin{theorem}\label{theor}
    Let $f$ and $\rho$ satisfy Assumptions \ref{as:heslip} and \ref{as:rho}. Then with the choice of $r_k$ and $A_k$ as follows:
    {\small\begin{equation*}
    \begin{split}
    r_k &= \xi\Bigg(\frac{\|\nabla ^2 f(x_k) - H_k\|}{\|\nabla f(x_k)\|^{1/2}}\left(1 + \frac{L_V}{2\sigma_V-L_V}\right)+  \sqrt{\xi\left(\frac{L_2}{2} + L_V\frac{2\eta + L_2/3}{2\sigma_V-L_V}\right)}\Bigg)^{-1},
    \end{split}
    \end{equation*}}
    {\small\begin{equation*}
    A_k = \frac{\|\nabla ^2 f(x_k) - H_k\|}{2\sigma_V-L_V} + \frac{r_k\|\nabla f(x_k)\|^{1/2}}{2\sigma_V - L_V}\left(2\eta + \frac{L_2}{3}\right),
\end{equation*}}
    every iteration of Algorithm \ref{alg:alg1} satisfies
    \begin{eqnarray*}
    f(x_k+d_k) - f(x_k) &\leq& -\eta r_k^3\|\nabla f(x_k)\|^{3/2} ~~~(*)\\ 
    \text{or}~~~\|\nabla f(x_k+d_k)\| &\leq& \xi\|\nabla f(x_k)\|~~~~~~~~~~(**).
\end{eqnarray*}
\end{theorem}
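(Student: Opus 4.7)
The plan is to split each iteration according to the Lagrange multiplier $\lambda_k\geq 0$ associated with the trust-region constraint in (\ref{alg:subproblem}). The first-order KKT conditions at $d_k$ read
\begin{equation*}
\nabla f(x_k)+H_k d_k+A_k\bigl(\nabla\rho(x_k+d_k)-\nabla\rho(x_k)\bigr)+2\lambda_k d_k=0,\quad \lambda_k(\|d_k\|^2-r_k^2\|\nabla f(x_k)\|)=0.
\end{equation*}
I will show that $\lambda_k=0$ forces $(**)$ while $\lambda_k>0$ (hence the active constraint $\|d_k\|^2=r_k^2\|\nabla f(x_k)\|$) forces $(*)$; together these cover every iteration.

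For the case $\lambda_k=0$, I solve the KKT identity for $H_k d_k$, substitute into the expansion $\nabla f(x_k+d_k)=\nabla f(x_k)+\nabla^2 f(x_k)d_k+e_k$ with $\|e_k\|\leq\tfrac{L_2}{2}\|d_k\|^2$ from Lemma \ref{lem:nest}, and add/subtract $H_k d_k$. Writing $D_k:=\nabla^2 f(x_k)-H_k$ and using the $L_V$-Lipschitzness of $\nabla\rho$,
\begin{equation*}
\|\nabla f(x_k+d_k)\|\leq\|D_k\|\|d_k\|+A_k L_V\|d_k\|+\tfrac{L_2}{2}\|d_k\|^2.
\end{equation*}
Applying $\|d_k\|\leq r_k\|\nabla f(x_k)\|^{1/2}$, dividing by $\|\nabla f(x_k)\|$, and plugging in the prescribed $A_k$, the target $\|\nabla f(x_k+d_k)\|\leq\xi\|\nabla f(x_k)\|$ reduces to the scalar inequality $r_k\alpha+r_k^2\beta\leq\xi$ with $\alpha=\|D_k\|\|\nabla f(x_k)\|^{-1/2}(1+L_V/(2\sigma_V-L_V))$ and $\beta=L_V(2\eta+L_2/3)/(2\sigma_V-L_V)+L_2/2$. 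The prescribed $r_k=\xi/(\alpha+\sqrt{\xi\beta})$ collapses this to $\alpha\sqrt{\xi\beta}\leq 2\alpha\sqrt{\xi\beta}$, which is immediate.

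For the case $\lambda_k>0$, optimality of $d_k$ against the feasible test point $d=0$ (with model value $0$) combined with $\sigma_V$-strong convexity of $\rho$ yields
\begin{equation*}
\langle\nabla f(x_k),d_k\rangle+\tfrac{1}{2}\langle H_k d_k,d_k\rangle\leq -A_k V(x_k,x_k+d_k)\leq -\tfrac{A_k\sigma_V}{2}\|d_k\|^2.
\end{equation*}
Combining with the cubic expansion in Lemma \ref{lem:nest} and $|\langle D_k d_k,d_k\rangle|\leq\|D_k\|\|d_k\|^2$ gives
\begin{equation*}
f(x_k+d_k)-f(x_k)\leq\bigl(\tfrac{\|D_k\|}{2}-\tfrac{A_k\sigma_V}{2}\bigr)\|d_k\|^2+\tfrac{L_2}{6}\|d_k\|^3.
\end{equation*}
Using $L_V\geq\sigma_V$ (automatic under Assumption \ref{as:rho}, hence $1/(2\sigma_V-L_V)\geq 1/\sigma_V$), the prescribed $A_k$ satisfies $\tfrac{A_k\sigma_V}{2}\geq\tfrac{\|D_k\|}{2}+(\eta+\tfrac{L_2}{6})r_k\|\nabla f(x_k)\|^{1/2}$, and substituting the active-constraint identity $\|d_k\|=r_k\|\nabla f(x_k)\|^{1/2}$ leaves exactly $-\eta r_k^3\|\nabla f(x_k)\|^{3/2}$, which is $(*)$.

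The main obstacle is that a single pair $(r_k,A_k)$ must close both cases simultaneously: in Case 1 the Hessian-inexactness $\|D_k\|$ enters linearly in $r_k$ while the Hessian-Lipschitz term enters quadratically, dictating the square-root form of $r_k$; in Case 2 the same $A_k$ must dominate $\|D_k\|/\sigma_V$ plus a cubic-scale term that itself depends on $r_k$. The asymmetric denominators $2\sigma_V-L_V$ (positive by Assumption \ref{as:rho}) and the precise constants inside $r_k$ and $A_k$ are what make a single well-defined pair satisfy both tight inequalities; once this coupling is unwound, the remaining algebra is routine bookkeeping.
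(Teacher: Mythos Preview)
Your proof is correct. The $\lambda_k=0$ branch follows the paper's Lemma~\ref{lem:grad_decr} essentially verbatim, including the final reduction of the quadratic inequality $\alpha r_k+\beta r_k^2\leq\xi$ via the choice $r_k=\xi/(\alpha+\sqrt{\xi\beta})$.

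The $\lambda_k>0$ branch, however, takes a genuinely different route. The paper (Lemma~\ref{lem:val_decr}) substitutes the KKT \emph{stationarity} identity into $\langle\nabla f(x_k),d_k\rangle$ and then invokes the \emph{second-order} KKT condition $H_k+A_k\nabla^2\rho(x_k+d_k)+\lambda I\succeq 0$ to control the possibly negative term $-\tfrac12\langle H_k d_k,d_k\rangle$; this uses both the $\sigma_V$-strong convexity and the $L_V$-Lipschitz gradient of $\rho$, which is exactly where the coefficient $(2\sigma_V-L_V)A_k$ comes from. You instead compare the model value at $d_k$ against the feasible test point $d=0$, which bounds $\langle\nabla f(x_k),d_k\rangle+\tfrac12\langle H_k d_k,d_k\rangle\leq -\tfrac{A_k\sigma_V}{2}\|d_k\|^2$ in one line with no second-order optimality needed, and then close the gap to the prescribed $A_k$ via the elementary inequality $\sigma_V\leq L_V$ (giving $\sigma_V/(2\sigma_V-L_V)\geq 1$). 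Your argument is shorter and uses strictly less of the KKT structure; the paper's route, on the other hand, retains an explicit $-\tfrac{\lambda}{2}\|d_k\|^2$ term in the descent bound, which it later reuses in the monotonicity Lemma~\ref{lem:monotone}. For the dichotomy asserted in Theorem~\ref{theor} itself, both approaches land exactly on $(*)$.
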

Important corollary of Theorem \ref{theor} is the monotone convergence of the proposed method: $f(x_{k+1}) \leq f(x_k)$ (see Lemma \ref{lem:monotone}), which is really advantageous for the real-world applications.

Theorem \ref{theor} allows us to obtain the needed number of iterations to obtain an $\e$-accurate solution in terms of either $\|\nabla f(x)\|$ or $f(x) - f_*$. Define $r_{min} := \inf\limits_{k}r_k$. The further intuition is the following: 

$\bullet$ In the nonconvex case, we examine all the iterations, while $\|\nabla f(x_k)\| \geq \e$. This bound combined with the monotone nature of the convergence result limits the number of iteration with the sufficient decrease in the function's value $(*)$ from above -- the method can perform no more, than $\OO\left(\tfrac{\e^{-3/2}}{\eta r_{min}^3}\right)$ of them. Furthermore, with the upper bound on the gradient's norm, one can prove, that there can be maximum $\OO\left(\log_{1/\xi}\frac{1}{\e}\right)$ iterations in a row with sufficient decrease in the gradient's norm $(**)$ between two consequent iterations with sufficient decrease in the function's value $(*)$ (see Lemma \ref{lem::set_size}). All in all, after $\OO\left(\tfrac{\e^{-3/2}\log_\xi\e}{\eta r_{min}^3}\right)$ iterations we obtain an $\e$-stationary first-order point.

$\bullet$ In the convex case, we consider all the iterations, while $f(x_k) - f_* \geq \e$. It turns out, that due to the convex structure, the proposed method can conduct no more, than $\OO\left(\tfrac{\e^{-1/2}}{\eta r_{min}}\right)$ iterations with the sufficient decrease in the function's value $(*)$ (see Lemma \ref{lem:conv_size}). Besides, with the proper choice  of hyperparameter $\xi$, one can derive another bound on the gradient's norm on these iterations $(*)$ (see Lemma \ref{lem:conv_g}). Combining gradients' bounds on iterations $(*)$ and $(**)$, we obtain, that after $\OO\left(\tfrac{\e^{-1/2}}{\eta r_{min}} + \log_\xi\e\right)$ iterations algorithm ends up in an $\e$-stationary zero-order point.

Convergence can be poor, if $r_k$ is not bounded from below. To investigate this we introduce the \textit{relative inexactness} $\Delta = \sup\limits_{k}\frac{\|\nabla ^2 f(x_k) - H_k\|}{\|\nabla f(x_k)\|^{1/2}}.$ Examining the $r_{min}$, one can notice, that $r_{min} = \frac{a}{b\Delta + c}$, where $a, b, c$ are constants. Therefore, $r_{min}$ is bounded from below if and only if $\Delta$ is bounded from above. Hence, instead of investigating $r_k$ we can inspect $\Delta$.

It is intuitive, that for decent Hessian approximations, while converging to the optimum, the scaling matrix $H_k$ does not differ too much from the exact Hessian $\nabla^2 f(x_k)$. On the contrary, if $\|\nabla ^2 f(x_k) - H_k\|$ is poorly bounded we get enormous relative inexactness while approaching the optimum. To be more precise, difference between Hessian and its approximation can be bounded in three different ways:

$\bullet$ By constant: $\|\nabla^2f(x_k) - H_k\| \leq M$. Then, with $\|\nabla f(x_k)\| \rightarrow \e$, we result in $\Delta \sim \e^{-1/2}$. To converge, it takes $\OO\left(\e^{-3}\log\frac{1}{\e}\right)$ iterations in the nonconvex case and $\OO\left(\e^{-2} + \log\frac{1}{\e}\right)$ iterations in the convex one.

$\bullet$ By insufficient gradient's degree: $\|\nabla^2 f(x_k) - H_k\| \leq M\|\nabla f(x_k)\|^{\alpha}$, where $\alpha < 1/2$. Then, with $\|\nabla f(x_k)\| \rightarrow \e$, we have $\Delta \sim \e^{\alpha-1/2}$. The method needs $\OO\left(\e^{3-3\alpha}\log\frac{1}{\e}\right)$ and $\OO\left(\e^{3\alpha-2}+\log\frac{1}{\e}\right)$ iterations in the nonconvex and convex cases, respectively.

$\bullet$ By decent gradient's degree: $\|\nabla^2 f(x_k) - H_k\| \leq M\|\nabla f(x_k)\|^{\beta}$, where $\beta \geq 1/2$. Then, with $\|\nabla f(x_k)\| \rightarrow \e$ we result in $\Delta = \OO(1)$. Therefore, we have following corollaries:

\begin{theorem}[Nonconvex case]\label{th::nonconvex}
   Let the conditions of Theorem \ref{theor} be met and $\Delta$ is bounded, then it  takes $\mathcal{O}\left(\e^{-3/2}\log \frac{1}{\e}\right)$ iterations to converge to a $\e$-approximate first-order stationary point.
\end{theorem}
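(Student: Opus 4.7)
The plan is to invoke the dichotomy of Theorem \ref{theor}: each iteration either achieves a value decrease of order $r_k^3 \|\nabla f(x_k)\|^{3/2}$ (call these $(*)$-iterations) or contracts the gradient norm by a factor of $\xi$ (call these $(**)$-iterations). Under the hypothesis that $\Delta$ is bounded, the radii $r_k$ prescribed in Theorem \ref{theor} have the form $r_k = \xi/(b\,\Delta_k + c)$ for strictly positive constants $b, c$ depending only on $L_2, L_V, \sigma_V, \eta, \xi$, where $\Delta_k := \|\nabla^2 f(x_k) - H_k\|/\|\nabla f(x_k)\|^{1/2}$. Hence $r_{\min} := \inf_k r_k \geq \xi/(b\Delta + c) > 0$, and also $r_k \leq r_{\max} := \xi/c < \infty$ uniformly.

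Next I would bound the number of $(*)$-iterations. Let $N$ be the first iteration with $\|\nabla f(x_N)\| \leq \e$ and let $\mathcal{K}_* \subseteq \{0,\ldots,N-1\}$ index the iterations on which $(*)$ fires. Since Lemma \ref{lem:monotone} guarantees that $f$ is nonincreasing on every iteration (also on the $(**)$ ones), telescoping yields
\[
\eta\, r_{\min}^{3}\, \e^{3/2}\, |\mathcal{K}_*| \;\leq\; \sum_{k \in \mathcal{K}_*} \eta\, r_k^{3}\, \|\nabla f(x_k)\|^{3/2} \;\leq\; \sum_{k=0}^{N-1} \bigl(f(x_k) - f(x_{k+1})\bigr) \;\leq\; f(x_0) - f_*,
\]
which gives $|\mathcal{K}_*| = \mathcal{O}(\e^{-3/2})$.

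For the $(**)$-iterations I would exploit the geometric gradient contraction. On a maximal run of consecutive $(**)$-iterations of length $\ell$ starting at index $k$, we have $\|\nabla f(x_{k+\ell})\| \leq \xi^{\ell} \|\nabla f(x_k)\|$, so staying above the $\e$-threshold forces $\ell \leq \log_{1/\xi}(\|\nabla f(x_k)\|/\e)$. To convert this into a concrete bound I would invoke Lemma \ref{lem::set_size}, which provides a uniform upper bound $G$ on $\|\nabla f(x_k)\|$ along the trajectory: the ingredients are the bounded sublevel set coming from Lemma \ref{lem:monotone}, the step-size cap $\|d_k\|^2 \leq r_{\max}^2 \|\nabla f(x_k)\|$, and the $L_2$-Lipschitz Hessian. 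Thus every $(**)$-run has length $\mathcal{O}(\log(1/\e))$, and since there are at most $|\mathcal{K}_*| + 1$ such runs (each bracketed by $(*)$-iterations or trajectory endpoints), the total $(**)$-count is $\mathcal{O}(\e^{-3/2} \log(1/\e))$.

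Adding the two estimates produces the claimed iteration complexity. The main obstacle in this plan is the per-run gradient bound of the third step: in the nonconvex regime $\|\nabla f\|$ can in principle swell during a $(*)$-iteration, and the work of Lemma \ref{lem::set_size} is exactly to argue that this swelling is uniformly controllable. Morally, the step-size cap $\|d_k\| \leq r_{\max}\|\nabla f(x_k)\|^{1/2}$ together with $L_2$-Lipschitzness of $\nabla^2 f$ forces $\|\nabla f(x_{k+1})\|$ to be within a constant multiple of $\|\nabla f(x_k)\|$, so a uniform trajectory bound on the gradient can be assembled inductively from $\|\nabla f(x_0)\|$ and the monotonicity of $f$; once this bound is in hand, the remaining counting is routine.
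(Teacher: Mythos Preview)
Your proposal is correct and follows essentially the same route as the paper: bound $r_{\min}$ via the boundedness of $\Delta$, telescope the $(*)$-iterations using Lemma \ref{lem:monotone} to get $|\mathcal{K}_*|=\mathcal{O}(\e^{-3/2})$, and bound each maximal $(**)$-run by $\mathcal{O}(\log(1/\e))$ via geometric contraction against a uniform gradient cap $G$; this is exactly Lemma \ref{lem::set_size} combined with Theorem \ref{th:selection}.

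The one place you drift from the paper is your account of where $G$ comes from. In the paper, $G$ is not manufactured by Lemma \ref{lem::set_size} from the step cap and the $L_2$-Lipschitz Hessian; it is simply part of Assumption \ref{as:sublevel} (bounded sublevel set, hence bounded gradient on it), and Lemma \ref{lem:monotone} is only used to ensure all iterates remain in that set. Your ``moral'' argument that the step cap plus $L_2$-Lipschitzness of $\nabla^2 f$ keeps $\|\nabla f(x_{k+1})\|$ within a constant multiple of $\|\nabla f(x_k)\|$ does not go through as stated, since it would need a uniform bound on $\|\nabla^2 f(x_k)\|$, which $L_2$-Lipschitzness of the Hessian does not supply. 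Drop that detour and invoke Assumption \ref{as:sublevel} directly; the rest of your argument is then identical to the paper's.
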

\begin{theorem}[Convex case]\label{th::convex}
    Let $f$ satisfy Assumptions \ref{as:conv}, \ref{as:sublevel}, conditions of Theorem \ref{theor} be met , matrix $H_k$ is quite positive-definite and $\Delta$ is bounded, then it takes $\mathcal{O}\left(\e^{-1/2} + \log \frac{1}{\e}\right)$ iterations to converge to a $\e$-approximate zero-order stationary point.
\end{theorem}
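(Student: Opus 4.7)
The plan is to exploit the dichotomy of Theorem \ref{theor} and count iterations of type $(*)$ and of type $(**)$ separately, up to the first iteration at which $f(x_k) - f_* \leq \e$. Since $\Delta$ is bounded, inspection of the expression for $r_k$ in Theorem \ref{theor} gives $r_k \geq r_{min} > 0$ uniformly, so every per-step progress constant from Theorem \ref{theor} is bounded away from zero and may be absorbed into $\mathcal{O}$.

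First I would bound the number of $(*)$-iterations. Combining Assumption \ref{as:conv} with the diameter bound from Assumption \ref{as:sublevel} and Cauchy--Schwarz gives
\[
f(x_k) - f_* \leq \<\nabla f(x_k), x_k - x_*> \leq D\,\|\nabla f(x_k)\|.
\]
Writing $\delta_k := f(x_k) - f_*$ and substituting this lower bound on $\|\nabla f(x_k)\|$ into the $(*)$-decrease $f(x_{k+1}) \leq f(x_k) - \eta r_{min}^3 \|\nabla f(x_k)\|^{3/2}$ yields $\delta_{k+1} \leq \delta_k - c\,\delta_k^{3/2}$ with $c := \eta r_{min}^3 / D^{3/2}$ at every $(*)$-iteration; since Lemma \ref{lem:monotone} already guarantees $\delta$ is non-increasing overall, the telescoping trick applied to $\delta_k^{-1/2}$ shows $\delta = O(j^{-2})$ along the subsequence of $(*)$-iterations. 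Hence at most $\OO\bigl(\e^{-1/2}/(\eta r_{min})\bigr)$ $(*)$-iterations can occur before $\delta_k \leq \e$, which is the content of Lemma \ref{lem:conv_size}.

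Next I would count the $(**)$-iterations. Lemma \ref{lem:conv_g}, invoked with the prescribed $\xi$, supplies a uniform bound $\|\nabla f(x_k)\| \leq G$ valid at the start of every $(*)$-iteration (using the monotone decrease of $f$ from Lemma \ref{lem:monotone}, the diameter bound, and the quite-positive-definiteness of $H_k$). On any run of consecutive $(**)$-iterations the gradient contracts by a factor $\xi$, so starting from at most $G$ we reach $\|\nabla f(x_k)\| \leq \e/D$ after at most $\OO(\log_{1/\xi}(GD/\e)) = \OO(\log(1/\e))$ $(**)$-iterations in total; via the same convex inequality $\delta_k \leq D\,\|\nabla f(x_k)\|$, this already forces $\delta_k \leq \e$. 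Summing the two counts yields the advertised $\OO(\e^{-1/2} + \log(1/\e))$ complexity.

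The main obstacle is Lemma \ref{lem:conv_g}: one must rule out that a $(*)$-iteration can inflate $\|\nabla f\|$ in an uncontrolled way before a subsequent run of $(**)$-iterations begins shrinking it. The argument has to exploit the quite-positive-definiteness of $H_k$ together with the specific form of the $A_k$, $r_k$ prescribed by Theorem \ref{theor}, so that the step $d_k$ stays in a regime where $L_2$-Hessian-Lipschitzness (Assumption \ref{as:heslip}) and convexity jointly force $\|\nabla f(x_{k+1})\|$ to remain comparable to $\|\nabla f(x_k)\|$ at $(*)$-iterations, yielding the uniform bound $G$.
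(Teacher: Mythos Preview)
Your bound on the number of $(*)$-iterations is correct and is exactly Lemma \ref{lem:conv_size}. The gap is in the count of $(**)$-iterations. From a uniform bound $\|\nabla f(x_k)\|\leq G$ at the start of each $(**)$-run you can only conclude that \emph{each run} has length at most $\OO(\log(1/\e))$; since there can be up to $|\FF|+1=\OO(\e^{-1/2})$ runs separated by $(*)$-iterations, this yields $|\GG|=\OO(\e^{-1/2}\log(1/\e))$, not $\OO(\log(1/\e))$ ``in total''. With your argument the final complexity is the multiplicative $\OO(\e^{-1/2}\log(1/\e))$ of the nonconvex analysis, not the additive one claimed.

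You also misread what Lemma \ref{lem:conv_g} delivers. It does not produce a uniform bound $G$ (that is already Assumption \ref{as:sublevel}); its content is the per-step multiplicative control $\|\nabla f(x_{k+1})\|\leq (1/\xi)\,\|\nabla f(x_k)\|$ valid at \emph{every} iteration, in particular at $(*)$-iterations. The paper's proof uses this to track the gradient norm over the whole trajectory:
\[
\|\nabla f(x_T)\|\;\leq\;(1/\xi)^{|\FF|}\,\xi^{|\GG|}\,\|\nabla f(x_0)\|\;=\;\xi^{\,T-2|\FF|}\,\|\nabla f(x_0)\|.
\]
Combined with $f(x_T)-f_*\leq D\,\|\nabla f(x_T)\|$ and $|\FF|\leq\OO(\e^{-1/2})$, this forces $T-2|\FF|\leq\OO(\log(1/\e))$, hence $T=\OO(\e^{-1/2}+\log(1/\e))$. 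The crucial point you are missing is that the increase factor at $(*)$-steps is exactly the reciprocal of the contraction factor at $(**)$-steps, so one $(*)$-step cancels at most one $(**)$-step in the global product; this is what turns the product $|\FF|\cdot\OO(\log(1/\e))$ into the sum $|\FF|+\OO(\log(1/\e))$.
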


Below we examine the relative inexactness for various Hessian approximations and discover, that it is bounded from above for all commonly used quasi-second-order methods, that makes proposed algorithm applicable for these approximations.

\section{Experiments}\label{sec:numerical_results}
To demonstrate the applicability of the suggested method, we compare it to the other classical and state-of-the-art second-order methods as well as provide evidence, that theoretical assumptions on the Hessian estimators are fulfilled.

\subsection{Relative inexactness examination}
\label{sec:relative_exactness}
This paragraph is dedicated to the experimental validation of the concepts introduced in the previous section. As we want to obtain the best possible rates, we will thoroughly examine the third case, when the norm of the difference between the Hessian and its estimator is bounded by a decent degree of the gradient's norm.

To verify our concern, we consider the dynamics of both gradient norms and the discrepancy between the true Hessian $\nabla^2 f$ and its' approximation $H$.
In particular, we report our findings for the Hutchinson's \cite{Bekas2007AnEF} preconditioner and (Empirical) Fisher approximation \cite{chaudhari2017entropysgdbiasinggradientdescent,kunstner2020limitationsempiricalfisherapproximation} (for more information see also Section \ref{sec:scaling_methods}).

\begin{figure}[H]
    \subfloat{\label{subfig:hutch_ri_roberta}%
      \includegraphics[width=0.3\linewidth]{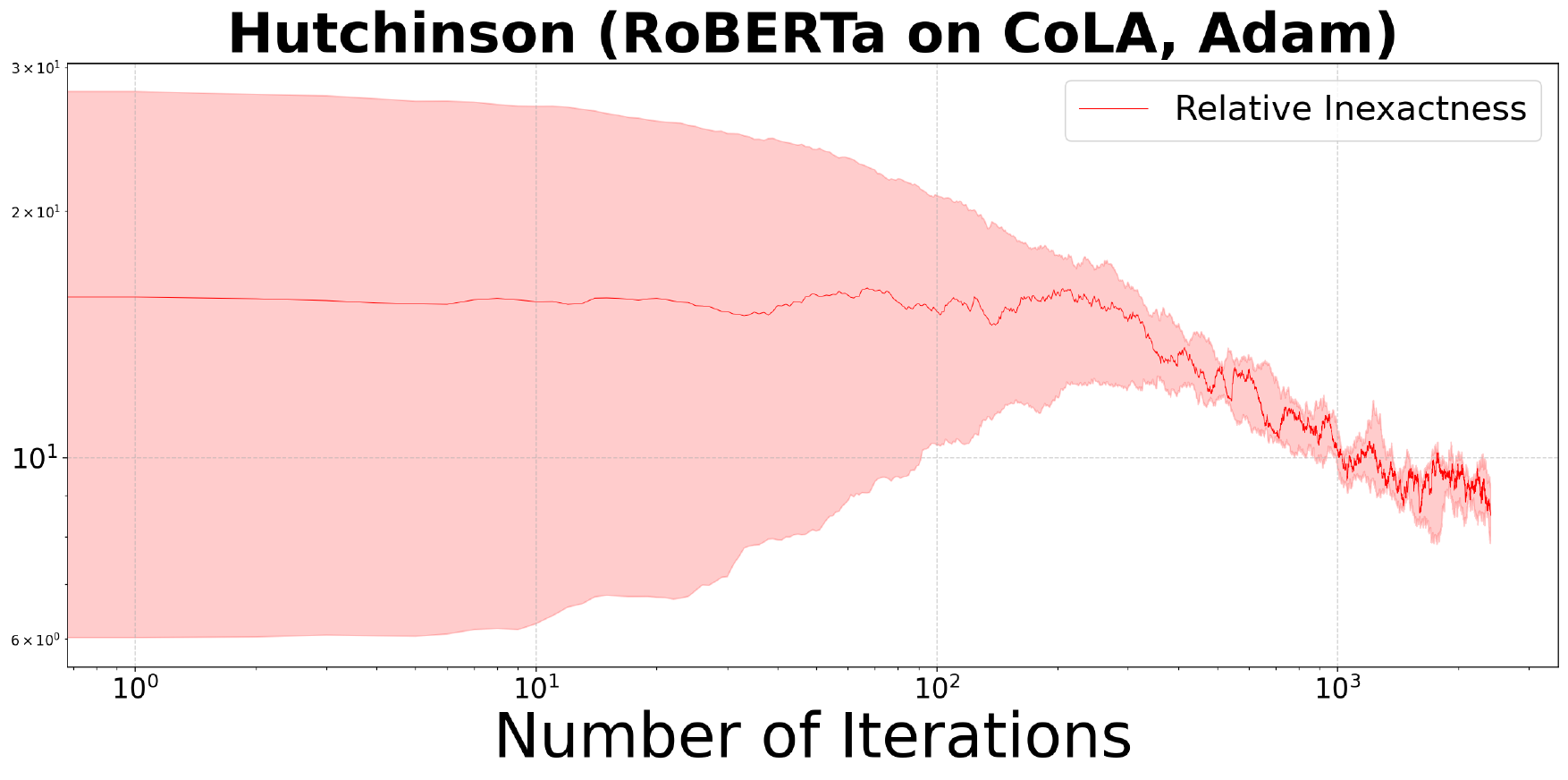}}%
    \hfill
    \subfloat{\label{subfig:fisher_ri_bert}%
      \includegraphics[width=0.3\linewidth]{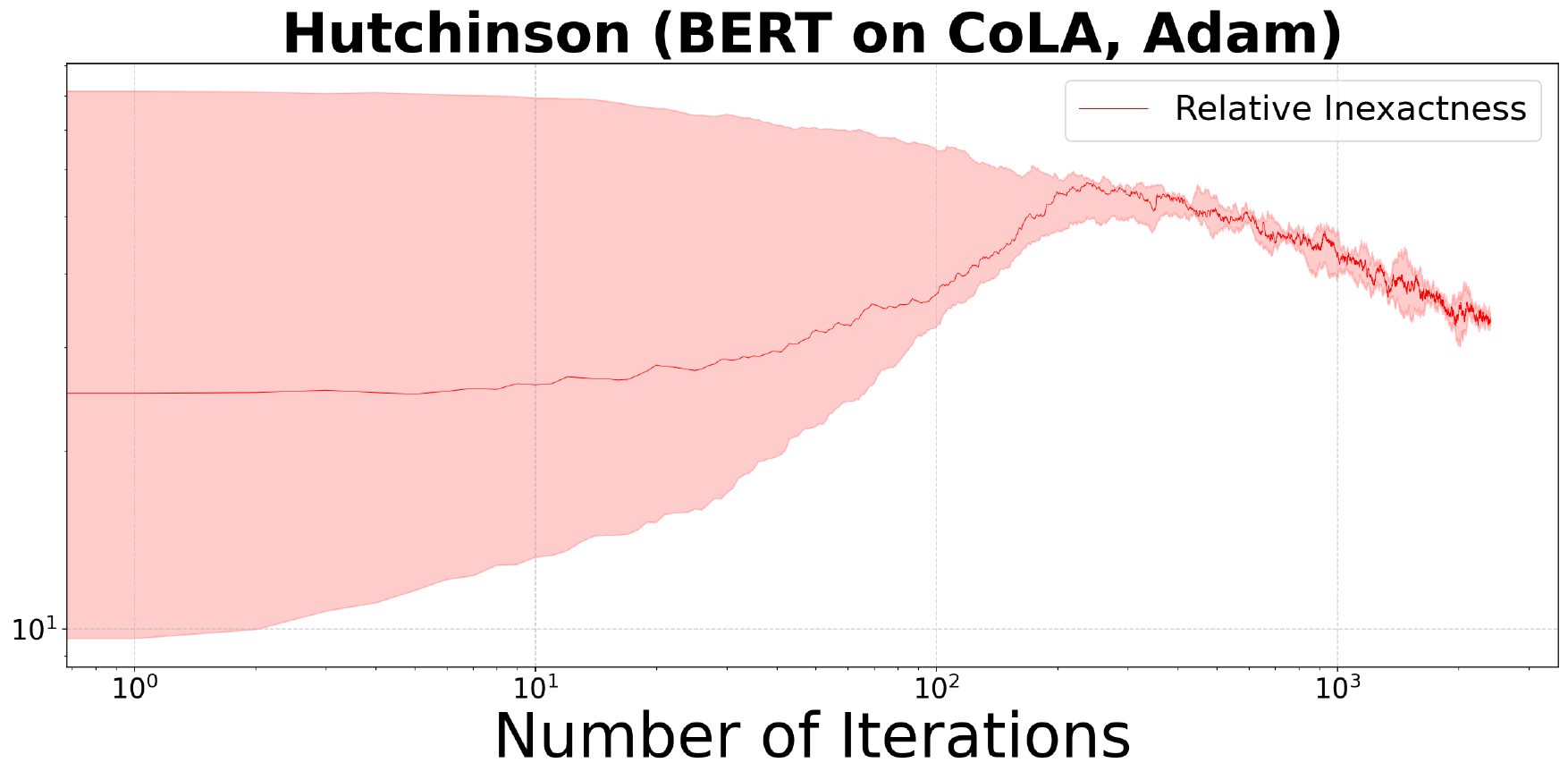}}%
    \hfill
    \subfloat{\label{subfig:hutch_ri_mlp}%
      \includegraphics[width=0.3\linewidth]{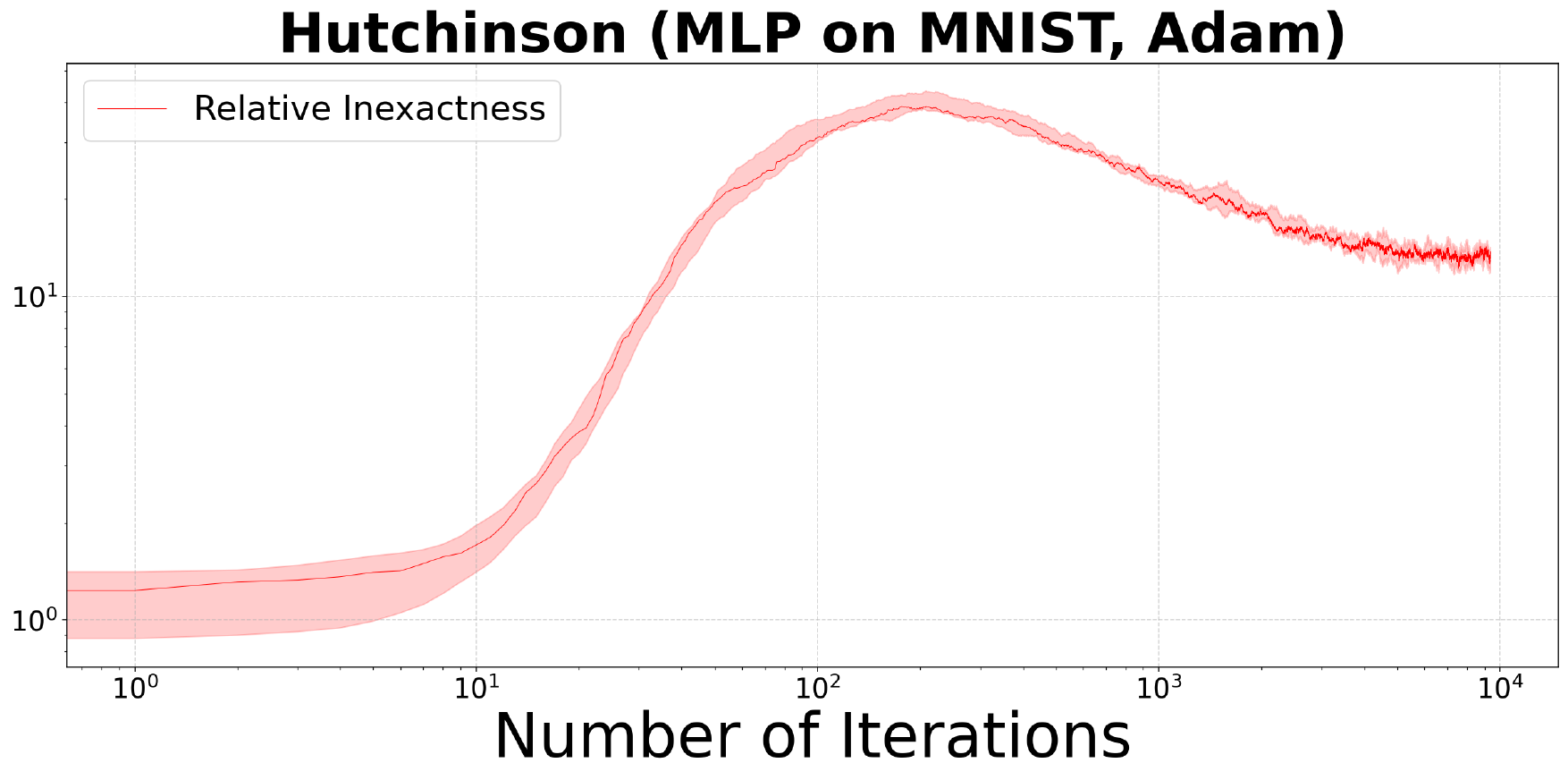}}%
    \hfill
    \subfloat{\label{subfig:fisher_ri_roberta}%
      \includegraphics[width=0.3\linewidth]{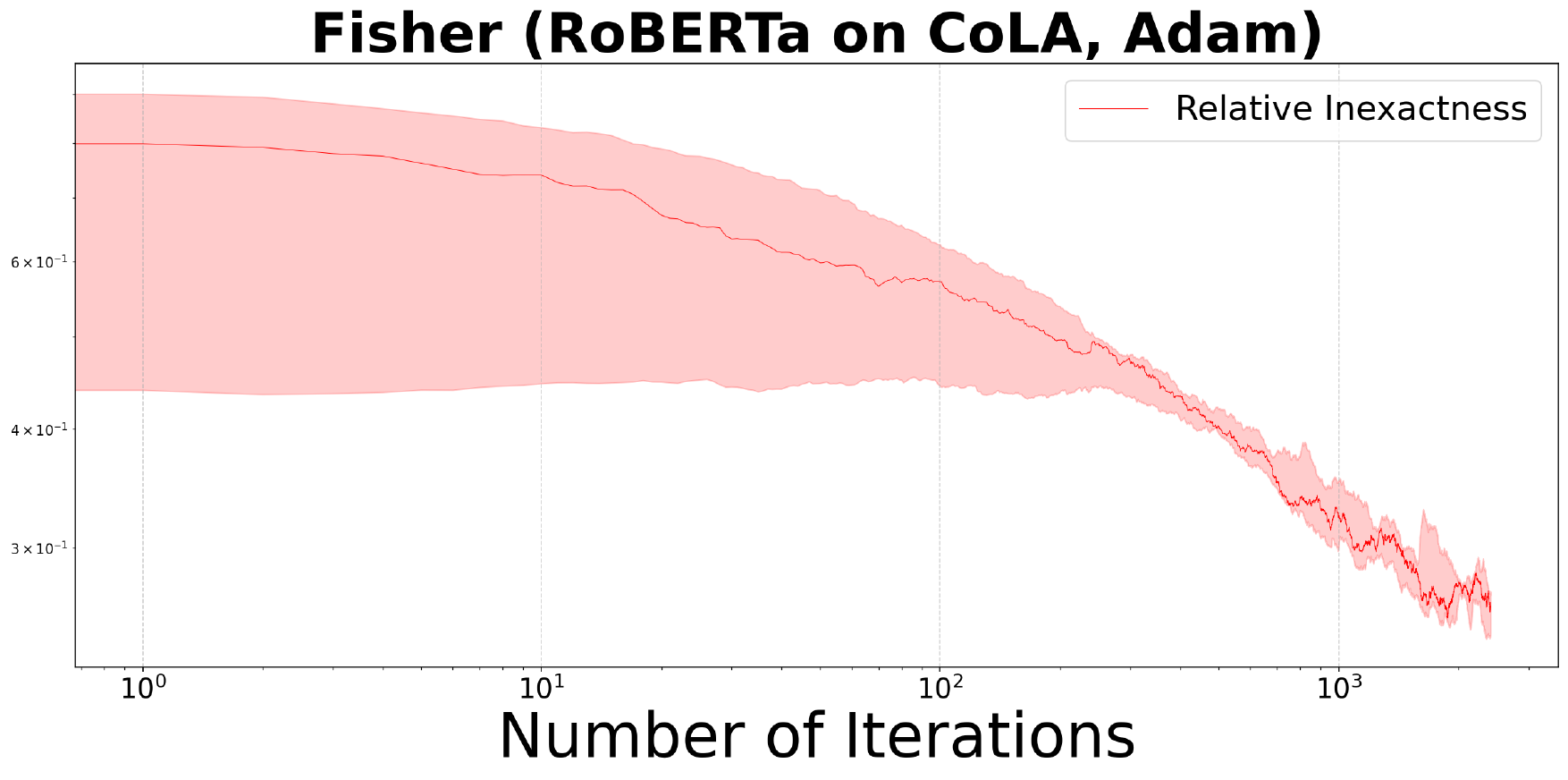}}%
    \hfill
    \subfloat{\label{subfig:hutch_ri_bert}%
      \includegraphics[width=0.3\linewidth]{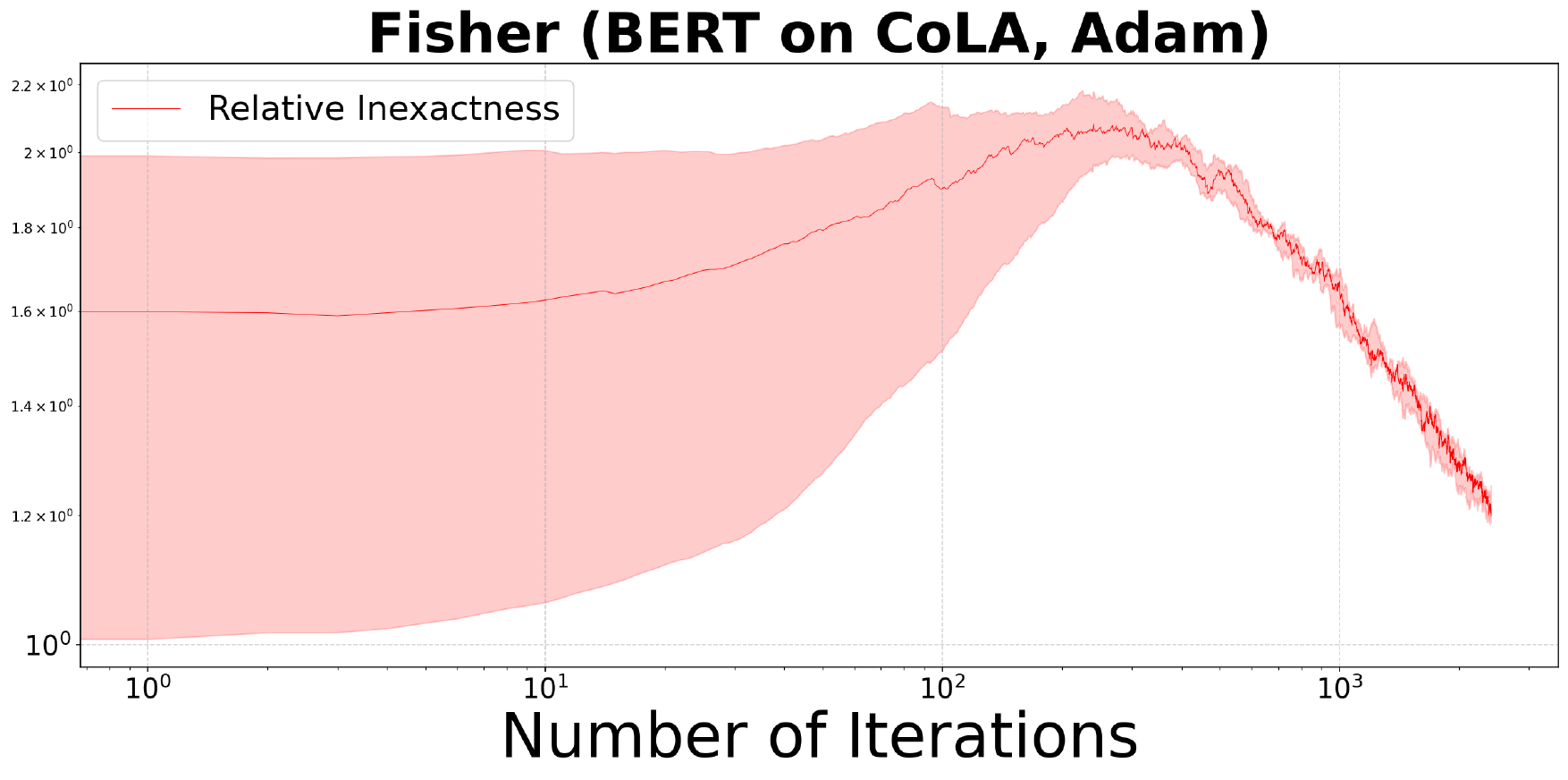}}%
    \hfill
    \subfloat{\label{subfig:fisher_ri_mlp}%
      \includegraphics[width=0.3\linewidth]{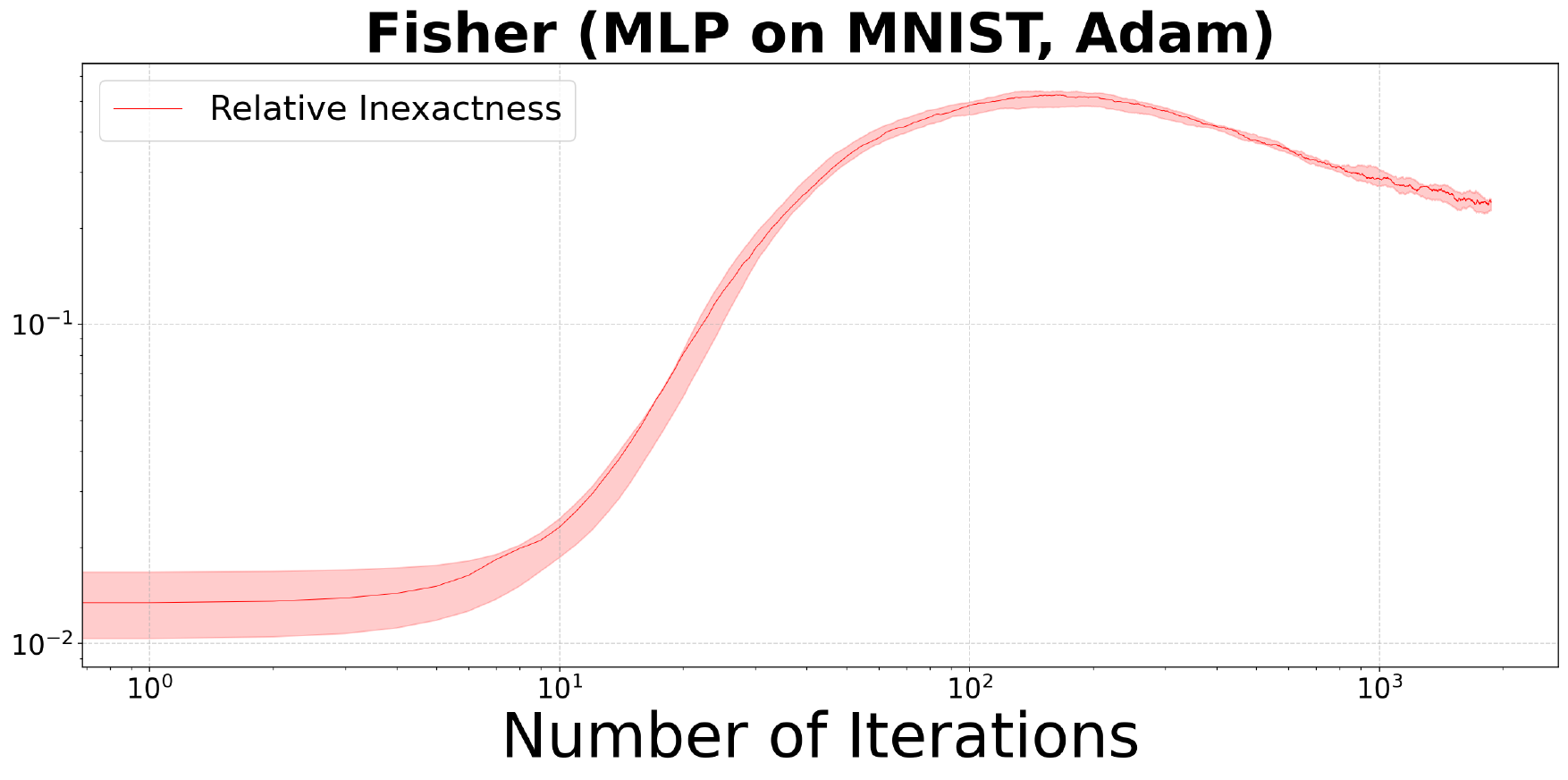}}%
  \caption{$\Delta$ evolution during Supervised fine-tuning task (\textit{first and second column}) and Full training on MNIST (\textit{third column}).}\label{fig:ri}
\end{figure}

We recorded the evolution of the relative inexactness on benchmark tasks that are commonly used in the optimization research literature:

\textbf{Supervised fine-tuning:} we considered a fine-tuning of a widely-used model BERT \cite{devlin2019bertpretrainingdeepbidirectional} and RoBERTa \cite{liu2019robertarobustlyoptimizedbert}, on CoLA \cite{warstadt2019neuralnetworkacceptabilityjudgments} dataset. To make the Hessian computation feasible, we train only the last classification layer and obtain the true Hessian w.r.t. its parameters. The model is trained with the Adam optimizer.

\textbf{Full training:} we train a simple MLP model on the MNIST \cite{mnist} dataset. Hessian and its' approximations are also computed w.r.t. the last layer's parameters. The model is trained with the Adam optimizer.

\textbf{Conclusion:} Seen from Figure \ref{fig:ri}, relative inexactness can be bounded from above by some constant $M$ on various problems and architectures, therefore a wide family of Hessian estimators is appropriate for the \textbf{HAT} algorithm. All the analysis with the derived convergence rates are valid for them.

In Appendix \ref{sec:add_numerical_results}, one can find experiments with relative inexactness for SR-1 and BFGS.

\subsection{HAT performance} In addition, we run our method on \texttt{a9a} dataset from \texttt{LIBSVM} \cite{chang2011libsvm}. We consider the empirical risk on a dataset $\{(a_i,b_i)\}_{i=1}^{n}$ where $a_i \in \mathbb{R}^d$ with two losses: convex and nonconvex. In particular, we use logistic regression ($b_i \in \{-1,+1\}$): $f(x) = \frac{1}{n}\sum_{i=1}^n \log (1 + \exp( - b_i \cdot x^T a_i)), $
and non-linear least squares: $f(x) = \frac{1}{n} \sum\limits_{i=1}^{n} (b_i - 1/(1 + e^{-a_i^T x}))^2.$
The regularization term is the Bregman divergence, generated by $\rho(x) = \frac{1}{2}\<Bx,x>$, with matrix's eigenvalues lying in $[\frac{7}{12}, 1]$.

\begin{figure}[H]
    \subfloat{
      \includegraphics[width=0.45\linewidth]{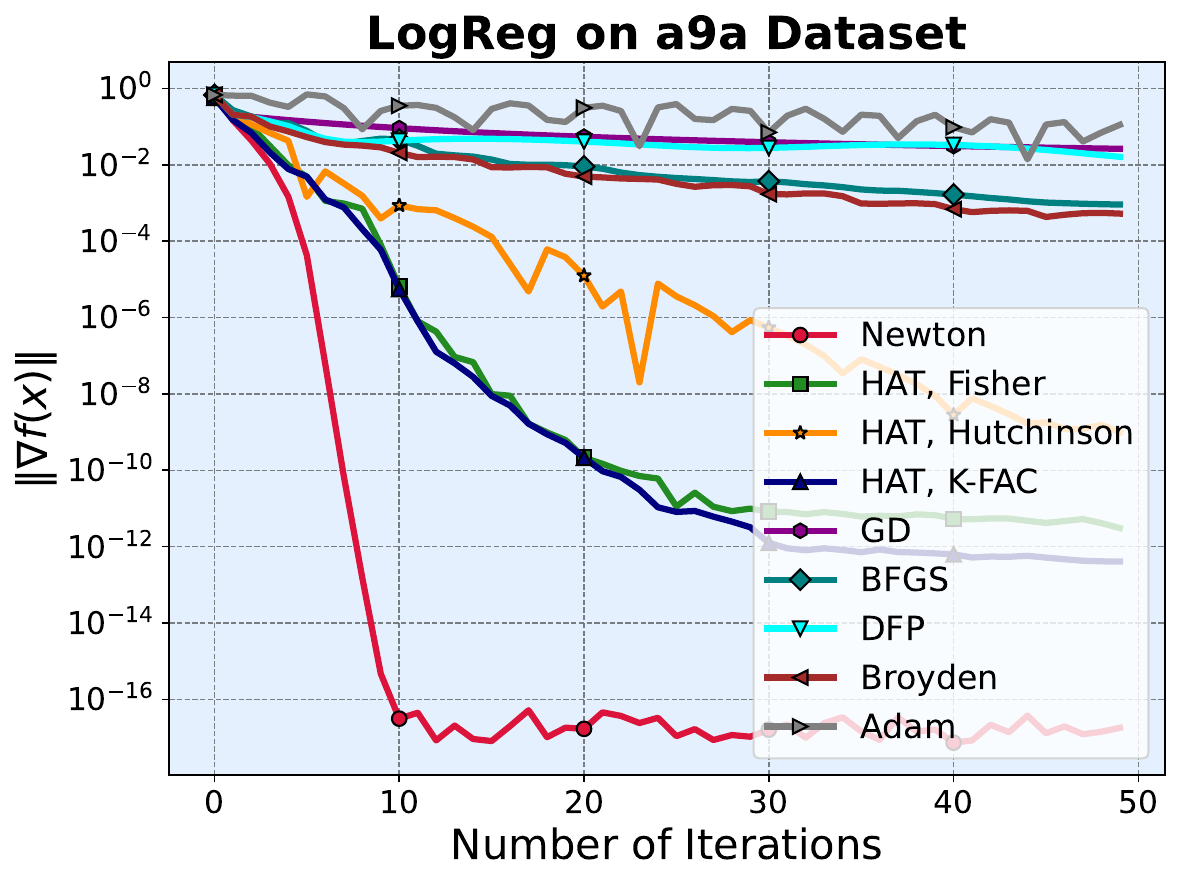}}
    \hfill
    \subfloat{
      \includegraphics[width=0.45\linewidth]{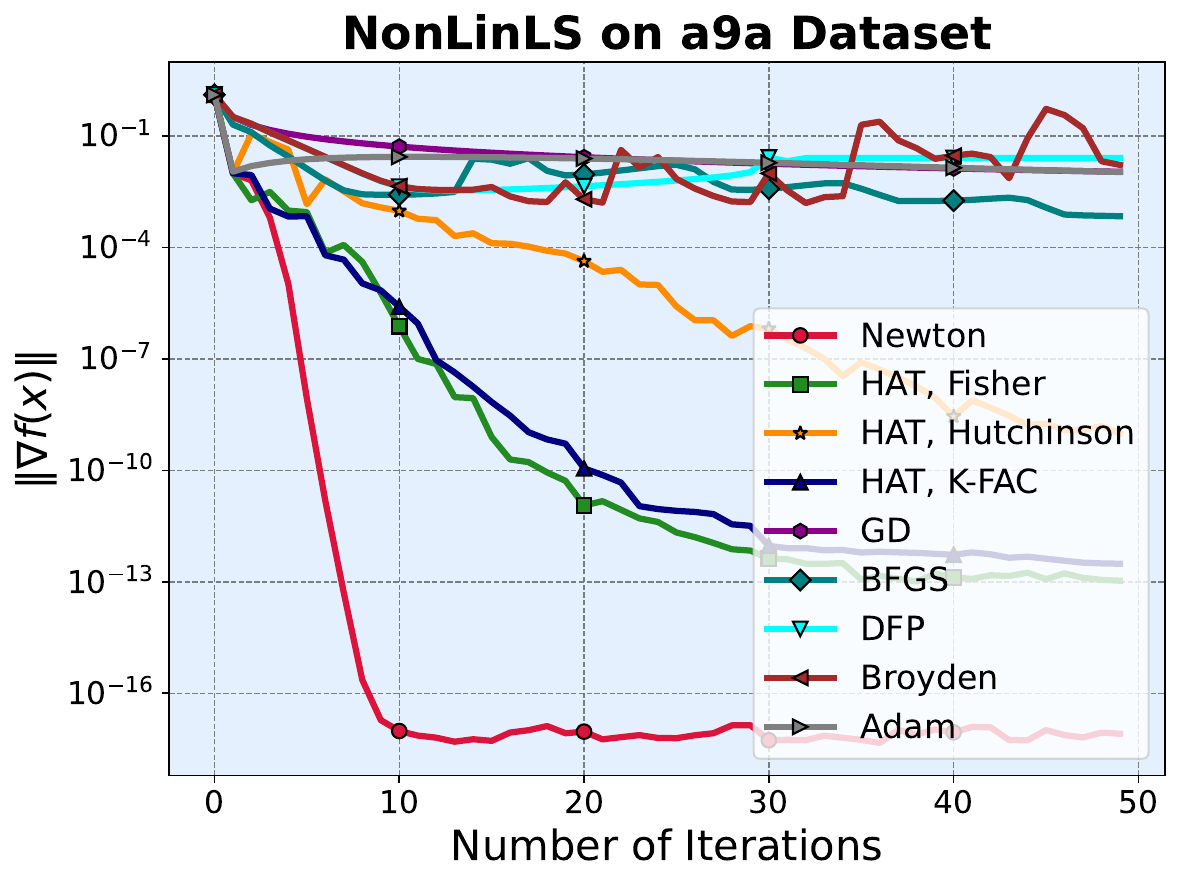}}
  \caption{Comparison of \textbf{HAT} with different methods on \texttt{a9a} dataset}\label{fig:hat_updated}
\end{figure}

We observe, that \textbf{HAT} with Hutchinson and Fisher estimation almost matches the performance of Newton's method on given datasets, though, it consumes significantly less time computing these approximations, instead of the exact Hessian.
For more detailed comparison, please, see Figure \ref{fig:hat_updated} and Figure \ref{fig:hat_nonconvex}. In Appendix \ref{sec:add_numerical_results}, one can find more experiments. All experiments were conducted via A100.

\section{Conclusion}

In this paper we proposed and analyzed the \textbf{Hessian Adaptive Trust-region} (\textbf{HAT}) method, which combines adaptive quadratic regularization, trust-region constraints, and inexact Hessian information within a general Bregman geometry. Our framework unifies and extends several existing lines of work: it allows for rich Hessian approximations (including quasi-Newton, Fisher-type, and diagonal/scaling-based preconditioners), supports a wide class of Bregman divergences, and is provably applicable to both convex and nonconvex problems. At the core of our analysis is the notion of \emph{relative inexactness}, which links the quality of the Hessian approximation to the current gradient norm and naturally captures many practical estimators used in modern large-scale optimization.

There are several promising directions for future work. First, extending \textbf{HAT} to fully stochastic or mini-batch regimes, with noisy gradients and Hessian estimators, would broaden its applicability to large-scale deep learning. Second, our relative inexactness framework suggests new designs of Hessian estimators tailored to specific architectures (e.g., transformers) and problem structures, as well as combinations with lazy or compressed updates in distributed settings. We believe that the proposed \textbf{HAT} framework provides a flexible and theoretically grounded foundation for the next generation of practical second-order methods.

\end{mainpart}
  \bibliographystyle{plainnat}
  \bibliography{references}
\begin{appendixpart}

\section{Missing proofs}\label{sec:appendix}

\begin{lemma}(Karush-Kuhn-Tucker conditions)\label{lem:kkt}
Let $d_k$ be the solution to the subproblem (\ref{alg:subproblem}), then there exists $\lambda \geq 0$, such that the following expressions take place:
\begin{align*}
    &\|d_k\|^2 \leq r_k\|\nabla f(x_k)\|^{1/2}\\
    &\lambda\left(\|d_k\| - r_k\|\nabla f(x_k)\|^{1/2}\right) = 0\\
    &H_k + A_k\nabla^2\rho(x_k+d_k) + \lambda I \succcurlyeq 0\\
    &(H_k+\lambda I) + A_k\left(\nabla\rho(x_k+d_k) - \nabla\rho(x_k)\right) = -\nabla f(x_k)
\end{align*}
Proof is provided in Theorem 4.1 from \cite{nocedal1999numerical}.
\end{lemma}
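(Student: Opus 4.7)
The plan is to apply the classical Karush--Kuhn--Tucker theorem for inequality-constrained nonlinear programs to subproblem \eqref{alg:subproblem} and identify each KKT condition with the corresponding line in the statement. First I would form the Lagrangian
\[\mathcal{L}(d,\lambda) := \<\nabla f(x_k), d> + \tfrac{1}{2}\<H_k d, d> + A_k V(x_k, x_k+d) + \lambda\bigl(\|d\|^2 - r_k^2\|\nabla f(x_k)\|\bigr),\]
with multiplier $\lambda \geq 0$ attached to the trust-region constraint. Primal feasibility of $d_k$ and the complementary-slackness identity $\lambda(\|d_k\| - r_k\|\nabla f(x_k)\|^{1/2}) = 0$ then follow directly from the definition of $d_k$ as the constrained minimizer and from the standard primal/complementary KKT structure.

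Second, I would derive the stationarity identity by computing $\nabla_d \mathcal{L}$. From the definition of the Bregman divergence one has $\nabla_y V(x,y) = \nabla\rho(y) - \nabla\rho(x)$, so the first-order condition $\nabla_d \mathcal{L}(d_k,\lambda) = 0$ reads
\[\nabla f(x_k) + H_k d_k + A_k\bigl(\nabla\rho(x_k+d_k) - \nabla\rho(x_k)\bigr) + 2\lambda d_k = 0,\]
which, after grouping the $d_k$ terms and rescaling the multiplier by a factor of two, matches the stated first-order identity. The second-order necessary condition then asserts positive semi-definiteness of the Lagrangian Hessian $\nabla_d^2 \mathcal{L}(d_k,\lambda) = H_k + A_k\nabla^2\rho(x_k+d_k) + 2\lambda I$ on the critical cone of $d_k$; the same rescaling of $\lambda$ yields the stated $\succcurlyeq$-condition.

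The main obstacle is ensuring that a constraint qualification holds so that a KKT multiplier is guaranteed to exist. With a single inequality constraint $g(d) := \|d\|^2 - r_k^2\|\nabla f(x_k)\| \leq 0$ whose gradient $\nabla g(d) = 2d$ vanishes only at $d = 0$, the linear-independence constraint qualification is automatic at every active $d_k \neq 0$. The degenerate case $d_k = 0$ leaves the constraint strictly inactive (when $\nabla f(x_k) \neq 0$) or reduces the subproblem to trivial stationarity at the initial point (when $\nabla f(x_k) = 0$), and in either case $\lambda = 0$ satisfies all four conditions. With LICQ secured throughout, the remainder is a direct invocation of the general KKT theorem exactly as in Theorem~4.1 of Nocedal--Wright cited in the statement.
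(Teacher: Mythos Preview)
The paper does not prove this lemma at all; it simply defers to Theorem~4.1 of Nocedal--Wright, which is exactly the reference you invoke at the end. Your Lagrangian derivation of primal feasibility, complementary slackness, stationarity (with the factor-of-two rescaling of $\lambda$), and the LICQ discussion are all correct and considerably more explicit than anything the paper offers.

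There is, however, a real gap in your treatment of the third line. You correctly note that the standard second-order necessary KKT condition gives positive semidefiniteness of $\nabla_d^2\mathcal{L}(d_k,\lambda)$ only on the \emph{critical cone}, and then assert that ``the same rescaling of $\lambda$ yields the stated $\succcurlyeq$-condition.'' But the stated condition is \emph{global} PSD on all of $\RR^n$, and critical-cone PSD does not imply this: when the ball constraint is active the critical cone is the hyperplane orthogonal to $d_k$, so it says nothing about the radial direction $d_k$ itself---yet that is precisely the direction in which the paper later applies the condition (see the step invoking Lemma~\ref{lem:kkt} in the proof of Lemma~\ref{lem:val_decr}). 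Theorem~4.1 in Nocedal--Wright does deliver global PSD, but it is stated and proved for \emph{purely quadratic} trust-region subproblems; here the Bregman term $A_k V(x_k,x_k+d)$ makes the objective non-quadratic, so that theorem does not apply verbatim. Neither your sketch nor the paper's bare citation actually bridges this; a rigorous argument would need an extension of the Mor\'e--Sorensen/Gay characterization to objectives of the form ``quadratic plus convex,'' exploiting the strong convexity of $\rho$ from Assumption~\ref{as:rho}.
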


\begin{lemma}\label{lem:rho}
    Let Assumption \ref{as:rho} be satisfied, then for $\forall x, d \in \RR^n$
    \[\sigma_V \|d\|^2 \leq \<\nabla\rho(x+d) - \nabla\rho(x),d> \leq L_V\|d\|^2.\]
\end{lemma}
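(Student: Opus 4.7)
The plan is to handle the two inequalities separately, using the two parts of Assumption \ref{as:rho} independently.

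For the lower bound $\sigma_V\|d\|^2 \leq \<\nabla\rho(x+d) - \nabla\rho(x),d>$, I would apply the strong convexity inequality twice, first with the point $x+d$ in place of $y$ and the point $x$ as the anchor, and then with the roles swapped (i.e., $x$ in place of $y$ with anchor $x+d$). This yields
\[\rho(x+d) \geq \rho(x) + \<\nabla\rho(x),d> + \tfrac{\sigma_V}{2}\|d\|^2,\]
\[\rho(x) \geq \rho(x+d) + \<\nabla\rho(x+d),-d> + \tfrac{\sigma_V}{2}\|d\|^2.\]
Summing the two inequalities causes the $\rho$-values to cancel, and rearranging gives exactly $\<\nabla\rho(x+d)-\nabla\rho(x),d> \geq \sigma_V\|d\|^2$. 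This is the standard derivation of the monotonicity estimate from strong convexity.

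For the upper bound $\<\nabla\rho(x+d) - \nabla\rho(x),d> \leq L_V\|d\|^2$, I would simply apply the Cauchy–Schwarz inequality followed by the Lipschitz gradient condition from Assumption \ref{as:rho}:
\[\<\nabla\rho(x+d)-\nabla\rho(x),d> \leq \|\nabla\rho(x+d)-\nabla\rho(x)\|\cdot\|d\| \leq L_V\|d\|\cdot\|d\| = L_V\|d\|^2.\]

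Both bounds are essentially immediate consequences of the two defining properties of $\rho$, and there is no real obstacle. The only minor subtlety is that the strong-convexity statement in Assumption \ref{as:rho} is given in functional form (a quadratic lower bound on $\rho$), so one must first convert it to the monotonicity form of the gradient — but this is the standard two-point symmetrization trick described above. No auxiliary lemmas beyond Assumption \ref{as:rho} itself are needed.
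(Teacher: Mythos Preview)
Your proof is correct and essentially matches the paper's: the lower bound is derived exactly as in the paper by summing the two strong-convexity inequalities at $x$ and $x+d$. For the upper bound the paper simply says ``in a similar way'' (i.e., the analogous two-point symmetrization using the descent lemma implied by the $L_V$-Lipschitz gradient), whereas you use Cauchy--Schwarz followed by the Lipschitz bound directly; both are immediate one-line arguments, so this is not a substantive difference.
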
    
\begin{proof}
    Write the definition of strong convexity from Assumption \ref{as:rho} with $x$ and $x+d$:
   \begin{eqnarray*}
        \rho(x+d) &\geq& \rho(x) + \<\nabla \rho(x), d> + \frac{\sigma_V}{2}\|d\|^2,\\
        \rho(x) &\geq& \rho(x+d) + \<\nabla \rho(x+d), -d> + \frac{\sigma_V}{2}\|d\|^2.
    \end{eqnarray*}
    Summing two inequalities and acquiring, we get:
    \[\<\nabla\rho(x+d) - \nabla\rho(x),d> \geq \sigma_V\|d\|^2.\]
    In a similar way, we obtain an upper bound:
    \[\<\nabla\rho(x+d) - \nabla\rho(x),d> \leq L_V\|d\|^2.\]
\end{proof}

\begin{lemma}\label{lem:val_decr}
    Let Assumptions \ref{as:heslip}, \ref{as:rho} be satisfied. If $d$ is a solution to the subproblem (\ref{alg:subproblem}) and Lagrange multiplier $\lambda > 0$, then
    \begin{eqnarray*}
    f(x_k + d) - f(x_k) \leq &-&\frac{1}{2}\Bigg(\frac{\lambda}{r_k\|\nabla f(x_k)\|^{1/2}} + \frac{(2\sigma_V - L_V) A_k}{r_k\|\nabla f(x_k)\|^{1/2}}\\
    &-& \frac{L_2}{3} - \frac{\|\nabla^2f(x_k) - H_k\|}{r_k\|\nabla f(x_k)\|^{1/2}}\Bigg)r_k^3\|\nabla f(x_k)\|^{3/2}.
    \end{eqnarray*}
\end{lemma}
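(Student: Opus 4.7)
The plan is to combine three ingredients: the one-sided Taylor bound of Lemma~\ref{lem:nest}, the stationarity and semidefiniteness KKT conditions of Lemma~\ref{lem:kkt}, and the two-sided estimates on the Bregman gradient difference from Lemma~\ref{lem:rho}. Positivity of $\lambda$ then activates the trust-region constraint through complementary slackness and turns the resulting bound into the claimed form.

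First I would apply Lemma~\ref{lem:nest} to obtain
\[
f(x_k+d) - f(x_k) \leq \<\nabla f(x_k), d> + \tfrac{1}{2}\<\nabla^2 f(x_k) d, d> + \tfrac{L_2}{6}\|d\|^3.
\]
The stationarity KKT condition reads $(H_k+\lambda I)d + A_k(\nabla\rho(x_k+d) - \nabla\rho(x_k)) = -\nabla f(x_k)$; taking its inner product with $d$ produces an explicit expression for $\<\nabla f(x_k), d>$. Substituting into the Taylor bound eliminates the linear term and leaves $-\<H_k d, d>$, $-\lambda\|d\|^2$, and $-A_k\<\nabla\rho(x_k+d) - \nabla\rho(x_k), d>$. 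The lower bound in Lemma~\ref{lem:rho} upper-bounds the last of these by $-A_k\sigma_V\|d\|^2$.

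Next I would split the quadratic part as $-\<H_k d, d> + \tfrac{1}{2}\<\nabla^2 f(x_k) d, d> = -\tfrac{1}{2}\<H_k d, d> + \tfrac{1}{2}\<(\nabla^2 f(x_k) - H_k) d, d>$. Cauchy--Schwarz bounds the mismatch term by $\tfrac{1}{2}\|\nabla^2 f(x_k) - H_k\|\|d\|^2$. For the remaining $-\tfrac{1}{2}\<H_k d, d>$, the PSD KKT condition $H_k + A_k\nabla^2\rho(x_k+d) + \lambda I \succeq 0$, together with $\|\nabla^2\rho\| \leq L_V$ (from the Lipschitz-gradient part of Assumption~\ref{as:rho}), yields $-\<H_k d, d> \leq (A_k L_V + \lambda)\|d\|^2$. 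Collecting all contributions gives
\[
f(x_k+d) - f(x_k) \leq -\tfrac{1}{2}\Bigl(\lambda + A_k(2\sigma_V - L_V) - \|\nabla^2 f(x_k) - H_k\| - \tfrac{L_2}{3}\|d\|\Bigr)\|d\|^2.
\]
Since $\lambda > 0$, complementary slackness forces $\|d\| = r_k\|\nabla f(x_k)\|^{1/2}$; substituting this inside the bracket and factoring $r_k^3\|\nabla f(x_k)\|^{3/2}$ out front gives exactly the claimed inequality.

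The main obstacle I anticipate is orchestrating the signs correctly when combining Lemma~\ref{lem:rho} with the two separate KKT relations: its \emph{lower} bound $\sigma_V\|d\|^2$ has to be applied to a term that enters with a negative coefficient (from the stationarity substitution), whereas its \emph{upper} bound enters via the PSD condition as a bound on $\<\nabla^2\rho(x_k+d)d, d>$ that is used to upper-bound $-\<H_k d, d>$. The only mildly non-routine step is translating the Lipschitz-gradient hypothesis on $\rho$ into the operator-norm bound $\|\nabla^2\rho\| \leq L_V$ needed to exploit the semidefiniteness KKT inequality; everything else is bookkeeping.
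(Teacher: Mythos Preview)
Your proposal is correct and follows essentially the same route as the paper: Taylor expansion, substitution of the KKT stationarity equation, the $\sigma_V$ lower bound from Lemma~\ref{lem:rho}, the add-and-subtract of $\tfrac{1}{2}\<H_kd,d>$, the Hessian-mismatch bound, and finally the PSD KKT condition together with $\nabla^2\rho \preceq L_V I$ to control $-\tfrac{1}{2}\<H_kd,d>$, after which complementary slackness fixes $\|d\| = r_k\|\nabla f(x_k)\|^{1/2}$. Your remark that the $L_V$ bound on $\<\nabla^2\rho(x_k+d)d,d>$ comes from the Lipschitz-gradient part of Assumption~\ref{as:rho} (rather than Lemma~\ref{lem:rho} literally) is in fact slightly more precise than the paper's own cross-reference.
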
    
\begin{proof}
    From Assumption \ref{as:heslip} and Lemma \ref{lem:nest}, we can derive
    \begin{eqnarray*}
        f(x_k+d) &\leq& f(x_k) + \<\nabla f(x_k),d> + \frac{1}{2}\<\nabla^2 f(x_k)d,d> + \frac{L_2}{6}\|d\|^3\\
        &=&f(x_k) - \<H_kd,d> - \lambda\|d\|^2 - A_k\<\nabla\rho(x_k+d)-\nabla\rho(x),d> \\
        &+&\frac{1}{2}\<\nabla^2 f(x_k)d,d> + \frac{L_2}{6}\|d\|^3\\
        &\stackrel{(L.\ref{lem:rho})}{\leq}&f(x_k) - \<H_kd,d> - \lambda\|d\|^2 - \sigma_VA_k\|d\|^2\\
        &+&\frac{1}{2}\<\nabla^2 f(x_k)d,d> + \frac{L_2}{6}\|d\|^3\\
\end{eqnarray*}
After adding and extracting $\frac{1}{2}\<H_k d,d>$ we get
\begin{eqnarray*}
        f(x_k+d)&=&f(x_k) - \frac{1}{2}\<H_kd,d> - \lambda\|d\|^2 - \sigma_VA_k\|d\|^2 \\
        &+& \frac{1}{2}\<\left(\nabla^2 f(x_k)-H_k\right)d,d> + \frac{L_2}{6}\|d\|^3\\
        &\leq& f(x_k) - \frac{1}{2}\<H_kd,d> - \lambda\|d\|^2 - \sigma_VA_k\|d\|^2 \\
        &+&\frac{1}{2}\|\nabla^2 f(x_k) - H_k\|\cdot \|d\|^2+ \frac{L_2}{6}\|d\|^3\\
        &\stackrel{(L.\ref{lem:kkt})}{\leq}& f(x_k) + \frac{A_k}{2}\<\nabla^2\rho(x_k+d)d,d> \\
        &-& \frac{1}{2}\left(\lambda + 2\sigma_VA_k - \|\nabla^2 f(x_k) - H_k\|\right)\|d\|^2 + \frac{L_2}{6}\|d\|^3 \\
        &\stackrel{(L.\ref{lem:rho})}{\leq}& f(x_k) \\
        &-& \frac{1}{2}\left(\lambda + 2\sigma_VA_k - \|\nabla^2 f(x_k) - H_k\| - L_VA_k\right)\|d\|^2 + \frac{L_2}{6}\|d\|^3\\
        &=& f(x_k) - \frac{1}{2}\left(\frac{\lambda}{\|d\|} + \frac{(2\sigma_V - L_V)A_k}{\|d\|} - \frac{L_2}{3} - \frac{\|\nabla^2 f(x_k) - H_k\|}{\|d\|}\right)\|d\|^3.
    \end{eqnarray*}
    As $\lambda > 0$, then, due to the KKT conditions $\|d\| = r_k\|\nabla f(x_k)\|^{1/2}$, we have
     \begin{eqnarray*}
    f(x_k + d) - f(x_k) \leq &-&\frac{1}{2}\Bigg(\frac{\lambda}{r_k\|\nabla f(x_k)\|^{1/2}} + \frac{(2\sigma_V - L_V) A_k}{r_k\|\nabla f(x_k)\|^{1/2}} \\
    &-& \frac{L_2}{3} - \frac{\|\nabla^2f(x_k) - H_k\|}{r_k\|\nabla f(x_k)\|^{1/2}}\Bigg)r_k^3\|\nabla f(x_k)\|^{3/2}.
    \end{eqnarray*}
\end{proof}
\begin{lemma}\label{lem:grad_decr}
    Let Assumptions \ref{as:heslip}, \ref{as:rho} be satisfied. If $d$ is a solution to the subproblem (\ref{alg:subproblem}) and Lagrange multiplier $\lambda = 0$, then
    \[\|\nabla f(x_k+d)\| \leq \left(\frac{L_2}{2}r_k^2 + \frac{L_V A_kr_k}{\|\nabla f(x_k)\|^{1/2}} + \frac{\|\nabla^2f(x_k)-H_k\|r_k}{\|\nabla f(x_k)\|^{1/2}}\right)\|\nabla f(x_k)\|.\]
\end{lemma}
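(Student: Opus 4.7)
The plan is to combine the KKT stationarity condition with $\lambda=0$ (Lemma \ref{lem:kkt}) with the Hessian-Lipschitz Taylor estimate for the gradient (Lemma \ref{lem:nest}), and then use the feasibility of $d$ together with the Lipschitz gradient of $\rho$ (Assumption \ref{as:rho}) to bound every term in $\|\nabla f(x_k+d)\|$ by an appropriate power of $\|\nabla f(x_k)\|$.

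First, I would invoke Lemma \ref{lem:kkt} with $\lambda = 0$ to rewrite the stationarity relation as
\[
\nabla f(x_k) + H_k d = -A_k\bigl(\nabla\rho(x_k+d) - \nabla\rho(x_k)\bigr).
\]
Next, by Lemma \ref{lem:nest} applied at $x_k$ and $x_k+d$,
\[
\bigl\|\nabla f(x_k+d) - \nabla f(x_k) - \nabla^2 f(x_k) d\bigr\| \le \tfrac{L_2}{2}\|d\|^2.
\]
Now I would add and subtract $H_k d$ inside $\nabla^2 f(x_k) d$, writing $\nabla^2 f(x_k) d = H_k d + (\nabla^2 f(x_k) - H_k) d$, and substitute the KKT identity above to obtain
\[
\nabla f(x_k+d) = -A_k\bigl(\nabla\rho(x_k+d) - \nabla\rho(x_k)\bigr) + \bigl(\nabla^2 f(x_k) - H_k\bigr) d + R,
\]
where $\|R\|\le \tfrac{L_2}{2}\|d\|^2$.

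From here, the triangle inequality gives
\[
\|\nabla f(x_k+d)\| \le A_k\,\|\nabla\rho(x_k+d) - \nabla\rho(x_k)\| + \|\nabla^2 f(x_k) - H_k\|\,\|d\| + \tfrac{L_2}{2}\|d\|^2.
\]
Using Assumption \ref{as:rho} (Lipschitz gradient of $\rho$) to bound the first term by $A_k L_V \|d\|$, and using the feasibility estimate $\|d\|\le r_k\|\nabla f(x_k)\|^{1/2}$ (still valid when $\lambda=0$, from the primal feasibility part of Lemma \ref{lem:kkt}) to replace $\|d\|$ and $\|d\|^2$, every term acquires a factor of $\|\nabla f(x_k)\|^{1/2}$ or $\|\nabla f(x_k)\|$. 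Factoring out $\|\nabla f(x_k)\|$ then yields exactly the claimed bound.

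There is no real obstacle in this proof; the only point that requires a bit of care is the rewriting step that brings $H_k$ into the Taylor expansion so that the KKT equation can be applied, and the bookkeeping needed to pull a common factor of $\|\nabla f(x_k)\|$ out of three terms with different homogeneities in $\|d\|$. Once the feasibility bound $\|d\|\le r_k\|\nabla f(x_k)\|^{1/2}$ is used consistently, the inequality assembles itself.
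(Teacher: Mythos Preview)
Your proposal is correct and follows essentially the same route as the paper: both combine the KKT stationarity with $\lambda=0$, the Hessian-Lipschitz gradient estimate from Lemma~\ref{lem:nest}, the add--subtract of $H_kd$, the $L_V$-Lipschitz bound on $\nabla\rho$, and the feasibility bound $\|d\|\le r_k\|\nabla f(x_k)\|^{1/2}$. The only cosmetic difference is that you first assemble the vector identity for $\nabla f(x_k+d)$ and then take norms, whereas the paper works directly in norms via successive triangle inequalities.
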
    
\begin{proof}
    Using Assumption \ref{as:heslip} and Lemma \ref{lem:nest}, we obtain:
    \begin{eqnarray*}
        \|\nabla f(x_k+d)\| &=& \|\nabla f(x_k+d) - \nabla f(x_k) - \nabla^2f(x_k)d + \nabla f(x_k) + \nabla^2f(x_k)d\|\\
        &\leq& \|\nabla f(x_k+d) - \nabla f(x_k) - \nabla^2f(x_k)d\| + \| \nabla f(x_k) + \nabla^2f(x_k)d\|\\
        &\leq& \frac{L_2}{2}\|d\|^2 + \|\nabla f(x_k) + \nabla^2f(x_k)d -H_kd + H_kd\| \\
        &\stackrel{(L.\ref{lem:kkt})}{\leq}& \frac{L_2}{2}\|d\|^2 + \|\nabla f(x_k)d + H_kd\| + \|\nabla^2 f(x_k)d - H_kd\|\\
        &=&  \frac{L_2}{2}\|d\|^2 + \|\lambda d + A_k(\nabla\rho(x_k+d)-\nabla\rho(x_k))\| + \|\nabla^2 f(x_k) - H_k\| \|d\|.
    \end{eqnarray*}    
    As $\lambda = 0$ and $\rho$ has $L_V$-Lipschitz gradient, therefore
    \begin{eqnarray*}
        \|\nabla f(x_k+d)\| &\leq& \frac{L_2}{2}\|d\|^2 +L_VA_k\|d\| + \|\nabla^2 f(x_k) - H_k\|\cdot \|d\|\\
        &\leq& \frac{L_2}{2}r_k^2\|\nabla f(x_k)\| +L_VA_kr_k\|\nabla f(x_k)\|^{1/2} \\
        &+& \|\nabla^2 f(x_k) - H_k\|r_k\|\nabla f(x_k)\|^{1/2}\\
        &=&\left(\frac{L_2}{2}r_k^2 + \frac{L_VA_kr_k}{\|\nabla f(x_k\|^{1/2})} + \frac{\|\nabla^2f(x_k)-H_k\|r_k}{\|\nabla f(x_k)\|^{1/2}}\right) \|\nabla f(x_k)\|.
    \end{eqnarray*}
\end{proof}
Now that we have derived all the needed inequalities, we can choose parameters $r_k$ and $A_k$, depending on $\xi$ and $\eta$, that the solution of the subproblem ($\ref{alg:subproblem}$) satisfies necessary conditions for the convergence.

\begin{theorem}\label{th:selection}\textup{(Theorem \ref{theor})}
    Let Assumptions \ref{as:heslip}, \ref{as:rho} be satisfied. If parameters $r_k$ and $A_k$ are chosen as
    \begin{eqnarray*}
    r_k &=& \frac{\xi}{\frac{\|\nabla ^2 f(x_k) - H_k\|}{\|\nabla f(x_k)\|^{1/2}}\left(1 + \frac{L_V}{2\sigma_V-L_V}\right) + \sqrt{\xi\left(\frac{L_2}{2} + L_V\frac{2\eta + L_2/3}{2\sigma_V-L_V}\right)}}\\
    A_k &=& \frac{\|\nabla ^2 f(x_k) - H_k\|}{2\sigma_V-L_V} + \frac{r_k\|\nabla f(x_k)\|^{1/2}}{2\sigma_V - L_V}\left(2\eta + \frac{L_2}{3}\right),
\end{eqnarray*}
    then the inequalities
    \begin{equation*}
    f(x_k+d_k) - f(x_k) \leq -\eta r_k^3\|\nabla f(x_k)\|^{3/2} ~~~\text{and}~~~ \|\nabla f(x_k+d_k)\| \leq \xi\|\nabla f(x_k)\|
\end{equation*}
    hold.
\end{theorem}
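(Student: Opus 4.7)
The argument is a case split on the Lagrange multiplier $\lambda$ attached by Lemma~\ref{lem:kkt} to the trust-region constraint. Exactly one of two scenarios occurs at each iteration: either $\lambda>0$ and by complementary slackness the constraint is active with $\|d_k\|=r_k\|\nabla f(x_k)\|^{1/2}$, or $\lambda=0$ and $d_k$ lies strictly inside the feasible set. My plan is to show that the first scenario yields $(*)$ via Lemma~\ref{lem:val_decr} and the second yields $(**)$ via Lemma~\ref{lem:grad_decr}, so the theorem follows once we verify that the prescribed $r_k$ and $A_k$ are chosen precisely to make the two bounds tight enough.

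In the case $\lambda>0$, Lemma~\ref{lem:val_decr} already gives an estimate of the form
\[
f(x_k+d_k)-f(x_k)\leq-\tfrac{1}{2}\!\left(\tfrac{\lambda}{r_k\|\nabla f(x_k)\|^{1/2}}+\tfrac{(2\sigma_V-L_V)A_k-\|\nabla^2 f(x_k)-H_k\|}{r_k\|\nabla f(x_k)\|^{1/2}}-\tfrac{L_2}{3}\right)r_k^3\|\nabla f(x_k)\|^{3/2}.
\]
Discarding the nonnegative $\lambda$-term, it suffices to verify
\[
(2\sigma_V-L_V)A_k-\|\nabla^2 f(x_k)-H_k\|\ \geq\ r_k\|\nabla f(x_k)\|^{1/2}\bigl(2\eta+\tfrac{L_2}{3}\bigr),
\]
and the prescribed formula for $A_k$ is exactly the value that turns this into an equality. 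Hence $(*)$ holds.

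In the case $\lambda=0$, Lemma~\ref{lem:grad_decr} produces a bound of the form $\|\nabla f(x_k+d_k)\|\leq(Qr_k^2+Pr_k)\|\nabla f(x_k)\|$, where substituting the prescribed $A_k$ into $L_V A_k r_k/\|\nabla f(x_k)\|^{1/2}$ and collecting terms in $r_k$ yields
\[
P=\tfrac{\|\nabla^2 f(x_k)-H_k\|}{\|\nabla f(x_k)\|^{1/2}}\!\left(1+\tfrac{L_V}{2\sigma_V-L_V}\right),\qquad Q=\tfrac{L_2}{2}+\tfrac{L_V(2\eta+L_2/3)}{2\sigma_V-L_V}.
\]
The target then reduces to $Qr_k^2+Pr_k\leq\xi$. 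Plugging in $r_k=\xi/(P+\sqrt{\xi Q})$ and simplifying rewrites the left-hand side as $\xi\cdot(P^2+P\sqrt{\xi Q}+\xi Q)/(P+\sqrt{\xi Q})^2$; since the denominator expands to $P^2+2P\sqrt{\xi Q}+\xi Q$ and therefore exceeds the numerator by $P\sqrt{\xi Q}\geq0$, the ratio is at most one and $(**)$ follows.

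The main obstacle I expect is precisely this last algebraic simplification: the formulas for $r_k$ and $A_k$ are reverse-engineered so that the quadratic $Qr^2+Pr-\xi$ in $r$ is nonpositive at $r=r_k$ while the value-decrease coefficient simultaneously hits exactly $2\eta$ under the other sign of $\lambda$. A small but essential bookkeeping point used throughout is that Assumption~\ref{as:rho} enforces $2\sigma_V-L_V>0$, so every denominator appearing in $A_k$, $P$, $Q$, and $r_k$ is strictly positive and the whole parameter choice is well-defined.
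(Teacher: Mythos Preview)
Your proposal is correct and follows essentially the same route as the paper: a KKT case split on $\lambda$, with Lemma~\ref{lem:val_decr} handling $\lambda>0$ (the choice of $A_k$ makes the non-$\lambda$ coefficient exactly $2\eta$, and the positive $\lambda$-term is then dropped) and Lemma~\ref{lem:grad_decr} handling $\lambda=0$ (substituting $A_k$ yields the quadratic $Qr_k^2+Pr_k\leq\xi$, which $r_k=\xi/(P+\sqrt{\xi Q})$ satisfies). The only cosmetic difference is that the paper packages your final algebraic check as the general observation ``for $ax^2+bx\leq 1$ one may take $x=1/(b+\sqrt{a})$'' after dividing through by $\xi$, whereas you verify the inequality directly; the underlying computation is identical.
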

\begin{proof}
    Denote $d$ as a solution to the subproblem (\ref{alg:subproblem}) on step $k$. If $\lambda >0$, then, according to Lemma \ref{lem:val_decr}, we have
    \begin{eqnarray*}
    f(x_k + d) - f(x_k) \leq &-&\frac{1}{2}\Bigg(\frac{\lambda}{r_k\|\nabla f(x_k)\|^{1/2}} + \frac{(2\sigma_V - L_V) A_k}{r_k\|\nabla f(x_k)\|^{1/2}} \\
    &-& \frac{L_2}{3} - \frac{\|\nabla^2f(x_k) - H_k\|}{r_k\|\nabla f(x_k)\|^{1/2}}\Bigg)r_k^3\|\nabla f(x_k)\|^{3/2}.
    \end{eqnarray*}
    If $\lambda = 0,$ then according to Lemma \ref{lem:grad_decr}, we have
    \begin{equation*}
    \|\nabla f(x_k+d)\| \leq \left(\frac{L_2}{2}r_k^2 + \frac{L_V A_kr_k}{\|\nabla f(x_k)\|^{1/2}} + \frac{\|\nabla^2f(x_k)-H_k\|r_k}{\|\nabla f(x_k)\|^{1/2}}\right)\|\nabla f(x_k)\|.
    \end{equation*}
    When $\lambda > 0$ we bound the term from 0, by setting 
    \begin{equation*}
        \frac{(2\sigma_V - L_V) A_k}{r_k\|\nabla f(x_k)\|^{1/2}} - \frac{L_2}{3} -\frac{\|\nabla^2f(x_k) - H_k\|}{r_k\|\nabla f(x_k)\|^{1/2}} = 2\eta.
    \end{equation*}
    We can express $A_k$ from above, therefore
    \begin{equation*}
        A_k = \frac{\|\nabla ^2 f(x_k) - H_k\|}{2\sigma_V-L_V} + \frac{r_k\|\nabla f(x_k)\|^{1/2}}{2\sigma_V - L_V}\left(2\eta + \frac{L_2}{3}\right).
    \end{equation*}
    Substituting this $A_k$ in the case, with $\lambda = 0$, we get
    \begin{equation*}
        \left(\frac{L_2}{2} + L_V\frac{2\eta + \frac{L_2}{3}}{2\sigma_V-L_V}\right)r_k^2 + \left(\frac{\|\nabla^2f(x_k)-H_k\|}{\|\nabla f(x_k)\|^{1/2}} + L_V\frac{\|\nabla^2f(x_k)-H_k\|}{(2\sigma_V-L_V)\|\nabla f(x_k)\|^{1/2}}\right)r_k \leq \xi
    \end{equation*}
    for some $\xi < 1$. Next, we divide both parts by $\xi$ and use the fact, that for inequality $ax^2 + bx \leq 1$ one can take $x = \frac{1}{b + \sqrt{a}}$. Therefore, we can choose
    \begin{equation*}
        r_k = \frac{1}{\frac{\Delta_k}{\xi} + \sqrt{\frac{a}{\xi}}} = \frac{\xi}{\Delta_k + \sqrt{a\xi}}
    \end{equation*}
    with 
    \begin{eqnarray*}
        \Delta_k &=& \frac{\|\nabla^2f(x_k)-H_k\|}{\|\nabla f(x_k)\|^{1/2}}\left(1 + \frac{L_V}{2\sigma_V-L_V}\right)\\
        a &=& \frac{L_2}{2} + L_V\frac{2\eta + \frac{L_2}{3}}{2\sigma_V-L_V}.
    \end{eqnarray*}
\end{proof}
\begin{lemma}\label{lem:monotone}
    Let Assumptions \ref{as:heslip}, \ref{as:rho} be satisfied. With selection of $r_k$ and $A_k$ from Theorem \ref{th:selection}, we have $f(x_{k+1}) \leq f(x_k)$ for all $k \in \mathbb{N}$.
\end{lemma}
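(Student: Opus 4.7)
The plan is to re-run the Taylor-expansion estimate from the proof of Lemma \ref{lem:val_decr}, but to stop one step earlier: rather than specialize to $\lambda>0$ and invoke complementary slackness to replace $\|d_k\|$ by $r_k\|\nabla f(x_k)\|^{1/2}$, I keep $\lambda\ge 0$ generic and leave $\|d_k\|$ explicit. Chaining Lemma \ref{lem:nest}, the KKT stationarity identity $\nabla f(x_k)=-(H_k+\lambda I)d_k - A_k(\nabla\rho(x_k+d_k)-\nabla\rho(x_k))$ from Lemma \ref{lem:kkt}, the inequalities of Lemma \ref{lem:rho} together with Assumption \ref{as:rho}, and the KKT semidefiniteness $H_k + A_k\nabla^2\rho(x_k+d_k)+\lambda I\succcurlyeq 0$ tested against $d_k$, should yield the unified bound
\[
f(x_k+d_k)-f(x_k)\le -\tfrac{1}{2}\bigl(\lambda+(2\sigma_V-L_V)A_k-\|\nabla^2 f(x_k)-H_k\|\bigr)\|d_k\|^2+\tfrac{L_2}{6}\|d_k\|^3,
\]
which is valid whether or not the trust-region constraint is active.

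Next I substitute the $A_k$ prescribed by Theorem \ref{th:selection}, for which direct inspection gives $(2\sigma_V-L_V)A_k-\|\nabla^2 f(x_k)-H_k\| = r_k\|\nabla f(x_k)\|^{1/2}(2\eta+\tfrac{L_2}{3})$. Dropping the nonnegative contribution $\tfrac{\lambda}{2}\|d_k\|^2$ and using the primal feasibility $\|d_k\|\le r_k\|\nabla f(x_k)\|^{1/2}$ (the first KKT condition of Lemma \ref{lem:kkt}, which holds at every iteration) to estimate $\tfrac{L_2}{6}\|d_k\|^3\le \tfrac{L_2}{6}\,r_k\|\nabla f(x_k)\|^{1/2}\|d_k\|^2$, the $\tfrac{L_2}{3}$ and $\tfrac{L_2}{6}$ cubic contributions collapse against one another, and I arrive at
\[
f(x_{k+1})-f(x_k)\le -\eta\,r_k\|\nabla f(x_k)\|^{1/2}\|d_k\|^2\le 0,
\]
which is the desired monotonicity (and, as a by-product, a sharper form of $(*)$ that covers the strict-constraint branch as well).

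The only conceptual subtlety is that Theorem \ref{theor} by itself is insufficient: its dichotomy $(*)\text{ or }(**)$ controls the function value only on the active branch, whereas $(**)$ alone bounds only the gradient norm. The resolution is structural rather than technical — the KKT stationarity and positive semidefiniteness invoked inside the proof of Lemma \ref{lem:val_decr} are available for every $\lambda\ge 0$, so the function-value-decrease estimate can be recycled uniformly in $k$ without introducing any hypothesis beyond Assumptions \ref{as:heslip}, \ref{as:rho} and the parameter selection of Theorem \ref{th:selection}.
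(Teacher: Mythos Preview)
Your proof is correct and follows essentially the same route as the paper's: both re-run the Lemma~\ref{lem:val_decr} estimate for general $\lambda\ge 0$, substitute the prescribed $A_k$ so that $(2\sigma_V-L_V)A_k-\|\nabla^2 f(x_k)-H_k\|=(2\eta+\tfrac{L_2}{3})r_k\|\nabla f(x_k)\|^{1/2}$, and then use primal feasibility $\|d_k\|\le r_k\|\nabla f(x_k)\|^{1/2}$ to make the $L_2$-contributions cancel. The only cosmetic difference is that the paper applies feasibility to the negative quadratic term (arriving at $f(x_{k+1})\le f(x_k)-\tfrac{\lambda}{2}\|d_k\|^2-\eta\|d_k\|^3$) rather than to the positive cubic one as you do.
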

\begin{proof}
    If $d$ is a solution to the subproblem (\ref{alg:subproblem}), then, due to  Lemma \ref{lem:val_decr}:
    \begin{eqnarray*}
    f(x_{k+1}) = f(x_k + d) &\leq& f(x_k) - \frac{1}{2}\left(\lambda  + (2\sigma_V-L_V)A_k - \left\|\nabla^2 f(x_k)-H_k\right\|\right)\|d\|^2 \\
    &+& \frac{L_2}{3}\|d\|^3\\
    &=&f(x_k) - \frac{1}{2}\left(\lambda + \left(2\eta + \frac{L_2}{3}\right)r_k\|\nabla f(x_k)\|^{1/2}\right)\|d\|^2 + \frac{L_2}{3}\|d\|^3\\
    &\stackrel{(L.\ref{lem:kkt})}{\leq}&f(x_k) - \frac{1}{2}\left(\lambda + \left(2\eta + \frac{L_2}{3}\right)\|d\|\right)\|d\|^2 + \frac{L_2}{3}\|d\|^3\\
    &=&f(x_k) -\frac{\lambda}{2}\|d\|^2 -  \left(\eta + \frac{L_2}{6} - \frac{L_2}{6}\right)\|d\|^3 \\
    &=&  f(x_k)-\frac{\lambda}{2}\|d\|^2 - \eta\|d\|^3 \leq f(x_k).
    \end{eqnarray*}
\end{proof}
\begin{remark}
Here and below while discussing iterations with sufficient decrease in the function's value: 
$$f(x_{k+1}) - f(x_k) \leq -\eta r_k^3\|\nabla f(x_k)\|^{3/2},$$
we will write
$$f(x_{k+1}) - f(x_k) \leq -\eta\|\nabla f(x_k)\|^{3/2}.$$
The intuition is the following: $r_k$ is bounded from below by some $r_{min}$, therefore after replacing $\eta := \eta r_{min}^3$ all inequalities stay valid.
\end{remark}
\begin{lemma}\label{lem::set_size}
    Let Assumption \ref{as:sublevel} be satisfied. Suppose, that for every iteration $k$ of Algorithm \ref{alg:alg1} the inequalities (\ref{eq:iter1}) and (\ref{eq:iter2}) hold and $f(x_{k}) \leq f(x_{k-1})$. Denote $j_f$ as the first iteration, when we obtain $\|\nabla f(x_{j_f})\| \leq \e$. Then for sets
    \[\FF = \{k < j_f ~:~ f(x_k+d_k) - f(x_k) \leq -\eta\|\nabla f(x_k)\|^{3/2}\},\]
    \[\GG = \{k < j_f ~:~ \|\nabla f(x_k+d_k)\|\leq \xi \|\nabla f(x_k)\|\},\]
    we have the following bounds on their size:
    \[|\FF| \leq \frac{1}{\eta}(f(x_0) - f_*)\e^{-3/2},\]
    \[|\GG| \leq \left(|\FF| + 1\right)\left\lceil\log_\xi\frac{\e}{G}\right\rceil.\]
\end{lemma}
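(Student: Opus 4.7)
The plan is to prove the two cardinality bounds separately, both by elementary arguments based on the dichotomy \eqref{eq:iter1}--\eqref{eq:iter2}, the monotonicity of $\{f(x_k)\}$ from Lemma \ref{lem:monotone}, and the uniform bound $\|\nabla f\|\leq G$ on the sublevel set from Assumption \ref{as:sublevel}. Throughout I will use that $k<j_f$ forces $\|\nabla f(x_k)\|>\e$ by the minimality in the definition of $j_f$; and per the preceding remark I treat $r_k^3$ as absorbed into $\eta$.

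For the bound on $|\FF|$, I would telescope the per-step decreases. For every $k\in\FF$ the estimate \eqref{eq:iter1} combined with $\|\nabla f(x_k)\|>\e$ gives $f(x_k)-f(x_{k+1})\geq \eta\e^{3/2}$. Since by Lemma \ref{lem:monotone} every other iteration also satisfies $f(x_{k+1})\leq f(x_k)$, all one-step drops are nonnegative, so
\[
|\FF|\cdot\eta\e^{3/2} \;\leq\; \sum_{k=0}^{j_f-1}\bigl(f(x_k)-f(x_{k+1})\bigr) \;=\; f(x_0)-f(x_{j_f}) \;\leq\; f(x_0)-f_*,
\]
which rearranges to the stated bound.

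For the bound on $|\GG|$, the key observation is that within a maximal run of consecutive $\GG$-iterations the gradient norm contracts geometrically by the factor $\xi<1$. Removing the indices in $\FF$ from $\{0,1,\dots,j_f-1\}$ leaves at most $|\FF|+1$ maximal subintervals of pure $\GG$-iterations (iterations that happen to lie in both sets are assigned to $\FF$, which is harmless). Consider one such run $[k_0,k_0+m-1]$. Iterating \eqref{eq:iter2} yields $\|\nabla f(x_{k_0+m-1})\|\leq \xi^{m-1}\|\nabla f(x_{k_0})\|$. By monotonicity $x_{k_0}$ lies in (the closure of) the sublevel set, so $\|\nabla f(x_{k_0})\|\leq G$ by Assumption \ref{as:sublevel}. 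On the other hand $k_0+m-1<j_f$ forces $\|\nabla f(x_{k_0+m-1})\|>\e$, hence $\xi^{m-1}G>\e$, i.e.\ $m-1<\log_\xi(\e/G)$, which gives $m\leq\lceil\log_\xi(\e/G)\rceil$. Summing across the at most $|\FF|+1$ runs yields $|\GG|\leq(|\FF|+1)\lceil\log_\xi(\e/G)\rceil$.

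The step requiring a bit of care is the sublevel-set argument for the $\GG$-bound: the set $L$ is defined with a strict inequality and therefore excludes $x_0$, so one should verify $\|\nabla f(x_0)\|\leq G$ separately (or enlarge $G$ by this finite constant) to handle a $\GG$-run that starts at $k_0=0$. The remaining minor points are purely combinatorial: the off-by-one in the $\log_\xi$-estimate (handled by the ceiling), the consistent assignment of iterations that satisfy both \eqref{eq:iter1} and \eqref{eq:iter2} to a single set, and the fact that $\e<G$ is needed so that $\log_\xi(\e/G)>0$. Neither part relies on any further structural property of $f$ or $\rho$ beyond what Lemmas \ref{lem:monotone} and Theorem \ref{theor} already supply.
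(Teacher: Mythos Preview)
Your proof is correct and follows essentially the same approach as the paper: a telescoping argument for $|\FF|$ using monotonicity and $\|\nabla f(x_k)\|>\e$, and a run-length bound for $|\GG|$ using the geometric contraction $\xi$ and the gradient bound $G$. Your presentation is in fact a bit more careful than the paper's about the off-by-one in the run-length estimate and about the sublevel-set inclusion at $x_0$, but the underlying argument is identical.
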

    \begin{proof}
        Note, that for every $k \in \FF$ we have $\|\nabla f(x_k)\| \geq \e$. Hence,
        \begin{equation*}
            |\FF| = \sum\limits_{k\in\FF}1 \leq \sum\limits_{k\in\FF} \frac{\|\nabla f(x_k)\|^{3/2}}{\e^{3/2}}.
        \end{equation*}
        After using the definition of the set $\FF$ and the fact, that all the iterations $x_k \in \FF$, we obtain
        \[\sum\limits_{k\in\FF}\|\nabla f(x_k)\|^{3/2} \leq \sum\limits_{k\in\FF}\frac{f(x_k) - f(x_{k+1})}{\eta}.\]
        Note, that $f(x_{k+1}) \leq f(x_k)~~\forall k \in \mathbb{N}$, even if these iterations do not lie in $\FF$. Therefore, after numerating all the iterations in $\FF$ as $k(1), k(2), \ldots, k(|\FF|)$ we have $f(x_{k(i)}) \leq f(x_{k(i-1)})$ and
        \[\sum\limits_{k\in\FF}f(x_k) - f(x_{k+1}) \leq f(x_0) - f_*.\]
        Combining all inequalities above, we get
        \[|\FF| = \sum\limits_{k\in\FF}1 \leq \sum\limits_{k\in\FF} \frac{\|\nabla f(x_k)\|^{3/2}}{\e^{3/2}} \leq \e^{-3/2}\sum\limits_{k\in\FF}\frac{f(x_k) - f(x_{k+1})}{\eta} \leq \frac{1}{\eta}(f(x_0) - f_*)\e^{-3/2}.\]
        Then we derive the bound on the size of $\GG$. Note, that if the iteration $k$ ends up in $\GG$, then for any $n > 0$, such that $k+n < j_f$ and iterations $k, k+1, \ldots, k+n$ lie in $\GG$, we have
         \[\e \leq \|\nabla f(x_{k+n})\| \leq \xi^n\|\nabla f(x_k)\| \leq \xi^n G.\]
         As a result we have an upper bound on iterations in $\GG$ in a row $n_{max}$. Then,
         \begin{equation*}
             |\GG| \leq \left(|\FF| + 1\right)n_{max} \leq  \left(|\FF| + 1\right)\left\lceil\log_{\xi}\frac{\e}{G}\right\rceil.
         \end{equation*}
    \end{proof}
\begin{theorem}\textup{(Theorem \ref{th::nonconvex})}
    Let Assumptions \ref{as:heslip}, \ref{as:sublevel}, \ref{as:rho} be satisfied. Then it takes
    \[T = \OO\left(\frac{1}{\eta}(f(x_0)-f_*)\e^{-3/2}\log_\xi\frac{\e}{G}\right)\]
    iterations to obtain an $\e$-approximate first-order stationary point.
\end{theorem}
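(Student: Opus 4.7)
The plan is to chain together Theorem \ref{theor} and Lemma \ref{lem::set_size} via a direct counting argument. Let $j_f$ denote the smallest index with $\|\nabla f(x_{j_f})\|\leq \e$, so that bounding $T$ reduces to bounding $j_f$. First I would invoke Theorem \ref{theor}, which guarantees that at every iteration $k$ at least one of the two alternatives
\[f(x_{k+1}) - f(x_k) \leq -\eta\|\nabla f(x_k)\|^{3/2} \quad (*) \qquad\text{or}\qquad \|\nabla f(x_{k+1})\| \leq \xi\|\nabla f(x_k)\| \quad (**) \]
holds (after absorbing $r_{min}^3$ into $\eta$ in the sense of the remark preceding Lemma \ref{lem::set_size}). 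Assigning each index $k < j_f$ to the first alternative it satisfies produces a disjoint partition $\{0,\ldots,j_f-1\} = \FF \cup \GG$ in the notation of Lemma \ref{lem::set_size}.

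Next, Lemma \ref{lem:monotone} supplies the monotonicity $f(x_{k+1}) \leq f(x_k)$, which is an additional hypothesis required by Lemma \ref{lem::set_size}. Applying that lemma directly, I obtain
\[|\FF| \leq \tfrac{1}{\eta}\bigl(f(x_0)-f_*\bigr)\e^{-3/2} \qquad\text{and}\qquad |\GG| \leq \bigl(|\FF|+1\bigr)\bigl\lceil \log_\xi(\e/G)\bigr\rceil.\]
Summing the two and combining the leading term $|\FF|\lceil\log_\xi(\e/G)\rceil$ with the additive $\lceil\log_\xi(\e/G)\rceil$ coming from the ``$+1$'' (which is dominated whenever $|\FF|\geq 1$, the only nontrivial case), I arrive at
\[j_f = |\FF|+|\GG| = \OO\bigl(|\FF|\log_\xi(\e/G)\bigr) = \OO\left(\tfrac{1}{\eta}\bigl(f(x_0)-f_*\bigr)\e^{-3/2}\log_\xi(\e/G)\right),\]
which is precisely the claimed complexity.

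The main point requiring care is not the counting itself, which is mechanical, but the implicit use of $r_{min} := \inf_k r_k > 0$, needed so that the substitution $\eta \leftarrow \eta r_{min}^3$ keeps the bound $(*)$ a genuine decrease. From the explicit formula for $r_k$ in Theorem \ref{theor}, this is equivalent to uniform boundedness of the relative inexactness $\Delta_k = \|\nabla^2 f(x_k) - H_k\|/\|\nabla f(x_k)\|^{1/2}$. This is the standing hypothesis of the companion Theorem \ref{th::nonconvex} in the main text, and is discussed in Section \ref{sec:family_methods} for the Hessian approximators of interest. A minor secondary check is that $\|\nabla f(x_k)\| \leq G$ on the sublevel set, which is guaranteed by Assumption \ref{as:sublevel} together with the monotone descent from Lemma \ref{lem:monotone}; this is what licenses the logarithmic bound on runs of $(**)$ inside Lemma \ref{lem::set_size}. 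Once these technical points are acknowledged, no further estimates are needed beyond the chain above.
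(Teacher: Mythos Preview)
Your proposal is correct and follows essentially the same approach as the paper: invoke Theorem \ref{theor} (Theorem \ref{th:selection}) together with Lemma \ref{lem:monotone} to verify the hypotheses of Lemma \ref{lem::set_size}, then sum the bounds on $|\FF|$ and $|\GG|$. If anything, your write-up is more explicit than the paper's own proof, spelling out the role of $r_{min}$, the absorption into $\eta$, and the use of the gradient bound $G$ from Assumption \ref{as:sublevel}.
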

     \begin{proof}
        Choose parameters $r_k$ and $A_k$ as in Theorem \ref{th:selection}. 
        As proven above, all iterations can be divided into ones with $\lambda > 0$ and $\lambda = 0$. Considering the first iteration $k$, when $\|\nabla f(x_k)\| \leq \e$ we need to conduct $|\FF| + |\GG|$ iterations. According to Theorem \ref{th:selection} and Lemma \ref{lem:monotone} the conditions of Lemma \ref{lem::set_size} are met, therefore we get the needed result.
     \end{proof}

\begin{lemma}\label{lem:conv_size} Let Assumptions \ref{as:conv}, \ref{as:heslip}, \ref{as:sublevel}, \ref{as:rho} be satisfied and for every iteration of Algorithm \ref{alg:alg1}, and the inequalities (\ref{eq:iter1}) and (\ref{eq:iter2}) hold. Denote $j_f$ as the first iteration, when we obtain $f(x_{j_f}) - f_* \leq \e$ and $\FF$ as in Lemma \ref{lem::set_size}. Then,
     \[|\FF| \leq \sqrt{\frac{4D^3}{\e\eta^2}}.\]
\end{lemma}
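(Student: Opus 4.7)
The plan is to use convexity together with the bounded sublevel set to convert the sufficient-decrease condition on iterations in $\mathcal{F}$ into a recursion purely on $\phi_k := f(x_k) - f_*$, and then to apply the classical reciprocal-square-root telescoping trick.

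First, I would observe that by Lemma \ref{lem:monotone} the iterates never leave the sublevel set $L$, so $\|x_k - x_*\| \leq D$ for every $k$. Convexity (Assumption \ref{as:conv}) then yields
$$\phi_k \;=\; f(x_k) - f_* \;\leq\; \langle \nabla f(x_k), x_k - x_* \rangle \;\leq\; D\,\|\nabla f(x_k)\|,$$
so $\|\nabla f(x_k)\| \geq \phi_k/D$ for every $k < j_f$.

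Next, enumerate $\mathcal{F}$ as $k(1) < k(2) < \cdots < k(|\mathcal{F}|)$ and set $\psi_i := \phi_{k(i)}$. Combining the sufficient-decrease condition (\ref{eq:iter1}) with the monotonicity $\psi_{i+1} \leq \phi_{k(i)+1}$ (again from Lemma \ref{lem:monotone}) gives
$$\psi_{i+1} \;\leq\; \psi_i - \eta \|\nabla f(x_{k(i)})\|^{3/2} \;\leq\; \psi_i - \frac{\eta}{D^{3/2}}\,\psi_i^{3/2}.$$
Applying the elementary bound $(1-t)^{-1/2} \geq 1 + t/2$ for $t \in [0,1)$ with $t = (\eta/D^{3/2})\,\psi_i^{1/2}$ converts this recursion into
$$\psi_{i+1}^{-1/2} - \psi_i^{-1/2} \;\geq\; \frac{\eta}{2\, D^{3/2}}.$$
Telescoping over $i = 1, \ldots, |\mathcal{F}|-1$ and using $\psi_{|\mathcal{F}|} > \e$ (since $k(|\mathcal{F}|) < j_f$) produces
$$\frac{(|\mathcal{F}|-1)\,\eta}{2 D^{3/2}} \;\leq\; \psi_{|\mathcal{F}|}^{-1/2} \;\leq\; \e^{-1/2},$$
which rearranges to the claimed bound $|\mathcal{F}| \leq \sqrt{4 D^3/(\e \eta^2)}$.

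The main obstacle is the degenerate regime where $(\eta/D^{3/2})\,\psi_i^{1/2} \geq 1$, in which the Taylor estimate is invalid. However, in that regime the recursion already forces $\psi_i - (\eta/D^{3/2})\psi_i^{3/2} \leq 0$, so one more iteration drives $\phi$ below $\e$ and $k(i)$ must coincide with $k(|\mathcal{F}|)$; this edge case is absorbed cleanly into the stated rate. The use of the remark preceding this lemma (replacing $\eta$ by $\eta r_{\min}^3$) is what lets us drop the $r_k^3$ factor from (\ref{eq:iter1}) throughout the argument.
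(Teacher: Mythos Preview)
Your proof is correct and follows essentially the same route as the paper: convexity plus the sublevel-set bound give $\|\nabla f(x_k)\|\ge \phi_k/D$, the sufficient-decrease condition then yields the recursion $\psi_{i+1}\le \psi_i-\tfrac{\eta}{D^{3/2}}\psi_i^{3/2}$, and the reciprocal-square-root telescoping finishes the argument. The only cosmetic difference is that the paper derives the telescoping step by the direct algebraic identity
\[
\frac{1}{\sqrt{\delta_{i+1}}}-\frac{1}{\sqrt{\delta_i}}
=\frac{\delta_i-\delta_{i+1}}{\sqrt{\delta_i}\sqrt{\delta_{i+1}}(\sqrt{\delta_i}+\sqrt{\delta_{i+1}})}
\]
together with $\delta_{i+1}\le\delta_i$, whereas you go through the inequality $(1-t)^{-1/2}\ge 1+t/2$; the paper's route has the minor advantage that it never needs your separate ``$t\ge 1$'' edge case. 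Note also that your final rearrangement literally gives $|\mathcal{F}|-1\le \sqrt{4D^3/(\e\eta^2)}$, not $|\mathcal{F}|$; this harmless $+1$ slack is present (and equally glossed over) in the paper's own telescoping.
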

     \begin{proof}
         Since $f$ is convex, then
         \begin{equation*}
             f_* \geq f(x) + \<\nabla f(x), x_*-x>.
         \end{equation*}
         Rearranging the terms and using the boundedness, we get
         \begin{eqnarray*}
             f(x) - f_* \leq \<\nabla f(x), x-x_*> \leq \|\nabla f(x)\|D.
         \end{eqnarray*}
         Then, if the iteration $k$ belongs to the set $\FF$, we obtain
         \begin{equation*}
             f(x_{k+1}) - f(x_k) \leq -\eta \|\nabla f(x_k)\|^{3/2} \leq -\eta \left(\frac{f(x_k) - f_*}{D}\right)^{3/2}.
         \end{equation*}
         Let us numerate all iterations in $\FF$ as $k(1), k(2), \ldots, k(|\FF|)$, then, denoting $\delta_i = f(x_{k(i)})-f_*$, we get
         \begin{eqnarray*}
             \frac{1}{\sqrt{\delta_{i+1}}} - \frac{1}{\sqrt{\delta_i}} = \frac{\sqrt{\delta_i} - \sqrt{\delta_{i+1}}}{\sqrt{\delta_i}\sqrt{\delta_{i+1}}} = \frac{\delta_i - \delta_{i+1}}{\sqrt{\delta_i}\sqrt{\delta_{i+1}}(\sqrt{\delta_i} + \sqrt{\delta_{i+1}})}.
         \end{eqnarray*}
         From Lemma \ref{lem:monotone} we have for any $k$: $f(x_{k+1}) \leq f(x_k)$, hence $\delta_{i+1} \leq \delta_i$, and, moreover, $\delta_{i} - \delta_{i+1} \geq f(x_{k(i)}) - f(x_{k(i)+1})$. Therefore,
         \begin{eqnarray*}
             \frac{1}{\sqrt{\delta_{i+1}}} - \frac{1}{\sqrt{\delta_i}} \geq \frac{\eta}{D^{3/2}}\frac{\delta_{i}^{3/2}}{\sqrt{\delta_i}\sqrt{\delta_{i+1}}(\sqrt{\delta_i} + \sqrt{\delta_{i+1}})} \geq \frac{\eta}{D^{3/2}}\frac{\delta_{i}^{3/2}}{\sqrt{\delta_i}\sqrt{\delta_{i}}(\sqrt{\delta_i} + \sqrt{\delta_{i}})} = \frac{\eta}{2D^{3/2}}.
         \end{eqnarray*}
         Summing for all iterations, we have for all $k$
         \begin{eqnarray*}
             \frac{1}{\sqrt{\delta_{k}}} - \frac{1}{\sqrt{\delta_0}} \geq \frac{k\eta}{2D^{3/2}}.
         \end{eqnarray*}
         Since $\delta_k \geq \e$, then
         \[k \leq \sqrt{\frac{4D^3}{\e\eta^2}}\]
     \end{proof}

\begin{lemma}\label{lem:conv_g}
    Let Assumptions \ref{as:heslip}, \ref{as:rho} be satisfied, for every iteration of Algorithm \ref{alg:alg1} the inequalities (\ref{eq:iter1}) and (\ref{eq:iter2}) hold, $H_k \succcurlyeq (L_V - \sigma_V)A_k\cdot I$ and $\xi \leq \tfrac{\sqrt{5}-1}{2}$. Then for for all $k\in \mathbb{N}$ we have
    \[\|\nabla f(x_k + d)\| \leq 1/\xi \|\nabla f(x_k)\|.\]
\end{lemma}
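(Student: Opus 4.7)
The plan is to split the analysis on the KKT Lagrange multiplier $\lambda$ from Lemma \ref{lem:kkt}. When $\lambda = 0$, Lemma \ref{lem:grad_decr} combined with the parameter choice from Theorem \ref{th:selection} already yields the stronger bound $\|\nabla f(x_k+d)\| \leq \xi \|\nabla f(x_k)\|$, and since $\xi < 1 \leq 1/\xi$ the conclusion is immediate. The interesting regime is $\lambda > 0$, in which only (\ref{eq:iter1}) is \emph{a priori} guaranteed and the gradient norm is not directly controlled; the two extra hypotheses $H_k \succcurlyeq (L_V-\sigma_V)A_k I$ and $\xi \leq (\sqrt{5}-1)/2$ are what buy us the missing control.

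In the $\lambda>0$ case the idea is to obtain an \emph{a priori} bound on $\|\nabla f(x_k) + H_k d\|$. Starting from the stationarity condition of Lemma \ref{lem:kkt}, I rewrite it as
\[
\nabla f(x_k) + H_k d \;=\; -\bigl(\lambda d + A_k(\nabla\rho(x_k+d)-\nabla\rho(x_k))\bigr),
\]
and take the inner product of this identity with $d$. The lower bound $\langle \nabla\rho(x_k+d)-\nabla\rho(x_k), d\rangle \geq \sigma_V\|d\|^2$ from Lemma \ref{lem:rho} combines with $\langle H_k d, d\rangle \geq (L_V-\sigma_V)A_k\|d\|^2$ coming from the new hypothesis on $H_k$ to collapse into a single term $L_V A_k\|d\|^2$, producing $(\lambda + L_V A_k)\|d\| \leq \|\nabla f(x_k)\|$. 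Invoking the $L_V$-Lipschitz gradient of $\rho$ once more then gives
\[
\|\lambda d + A_k(\nabla\rho(x_k+d)-\nabla\rho(x_k))\| \;\leq\; (\lambda + L_V A_k)\|d\| \;\leq\; \|\nabla f(x_k)\|,
\]
so $\|\nabla f(x_k) + H_k d\| \leq \|\nabla f(x_k)\|$.

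From here the argument mirrors Lemma \ref{lem:grad_decr}: Assumption \ref{as:heslip} gives $\|\nabla f(x_k+d)\| \leq \tfrac{L_2}{2}\|d\|^2 + \|\nabla f(x_k)+H_k d\| + \|\nabla^2 f(x_k)-H_k\|\|d\|$, and since $\lambda>0$ the trust-region constraint is active, so $\|d\| = r_k\|\nabla f(x_k)\|^{1/2}$. Substituting everything yields
\[
\|\nabla f(x_k+d)\| \;\leq\; \Bigl(1 + \tfrac{L_2}{2} r_k^2 + \tfrac{\|\nabla^2 f(x_k)-H_k\|}{\|\nabla f(x_k)\|^{1/2}} r_k\Bigr)\|\nabla f(x_k)\|.
\]
The choice of $r_k$ in Theorem \ref{th:selection} was precisely engineered so that $\tfrac{L_2}{2}r_k^2 + \tfrac{\|\nabla^2 f(x_k)-H_k\|}{\|\nabla f(x_k)\|^{1/2}}r_k$ is dominated by the expression $a r_k^2 + b r_k \leq \xi$ appearing in its construction, so the bracketed constant is at most $1+\xi$.

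The remaining step is purely algebraic: the cap $\xi \leq (\sqrt{5}-1)/2$ is equivalent to $\xi^2 + \xi - 1 \leq 0$, i.e.\ to $1+\xi \leq 1/\xi$, which upgrades $\|\nabla f(x_k+d)\| \leq (1+\xi)\|\nabla f(x_k)\|$ into the claimed $\|\nabla f(x_k+d)\| \leq (1/\xi)\|\nabla f(x_k)\|$. The main obstacle I anticipate is identifying the correct inner-product trick and its matching with the $\rho$-Lipschitz constant: the assumption $H_k \succcurlyeq (L_V-\sigma_V)A_k I$ is calibrated so that the leftover Bregman contribution $\sigma_V A_k\|d\|^2$ is exactly extended to $L_V A_k\|d\|^2$, which is what allows the triangle inequality with the Lipschitz bound on $\nabla\rho$ to collapse the right-hand side of the KKT identity to $\|\nabla f(x_k)\|$; without this cancellation the resulting constant would not be $1+\xi$ and the final algebraic step would fail for any reasonable $\xi$.
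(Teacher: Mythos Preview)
Your proof is correct and follows essentially the same approach as the paper: both use the KKT stationarity together with the inner-product trick $\langle H_k d,d\rangle + \sigma_V A_k\|d\|^2 \geq L_V A_k\|d\|^2$ (enabled by the hypothesis $H_k \succcurlyeq (L_V-\sigma_V)A_k I$) to bound $\|\lambda d + A_k(\nabla\rho(x_k+d)-\nabla\rho(x_k))\|\leq\|\nabla f(x_k)\|$, then invoke the $r_k$-selection to get the $(1+\xi)$ factor and the golden-ratio cap on $\xi$ to upgrade it to $1/\xi$. The only cosmetic difference is that you split on $\lambda=0$ versus $\lambda>0$, whereas the paper runs a single uniform argument using the constraint inequality $\|d\|\leq r_k\|\nabla f(x_k)\|^{1/2}$ rather than equality; your case split is harmless but unnecessary.
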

    \begin{proof}
        As previously, we have
        \begin{eqnarray*}
        \|\nabla f(x_k + d)\| &\leq& \frac{L_2}{2}\|d\|^2 + \|\lambda d + A_k\left(\nabla \rho(x_k+d) - \nabla \rho(x_k)\right)\| + \|\nabla^2 f(x_k) - H_k\|\cdot \|d\|\\
        &\leq&\frac{L_2}{2}\|d\|^2 + \frac{\lambda \|d\|^2 + L_VA_k\|d\|^2}{\|d\|} + \|\nabla^2 f(x_k) - H_k\|\cdot \|d\|.
        \end{eqnarray*}
        From the KKT conditions we have
        \[\<\nabla f(x_k), d> = \<H_kd + \lambda d + A_k\left(\nabla \rho(x_k + d) - \nabla \rho(x_k)\right),d> \geq \lambda \|d\|^2 + A_k\sigma_V\|d\|^2 + \<H_kd,d>,\]
        while from the Cauchy-Schwarz inequality, we have
        \[\<\nabla f(x_k),d> \leq \|\nabla f(x_k)\|\cdot \|d\|.\]
        Combining this inequalities, we get
        \begin{eqnarray*}
             \|\nabla f(x_k + d)\| &\leq& \frac{L_2}{2}\|d\|^2 +\frac{\lambda \|d\|^2 + L_VA_k\|d\|^2}{\lambda \|d\|^2 + \sigma_VA_k\|d\|^2 + \<H_kd,d>}\|\nabla f(x_k)\|\\
             &+& \|\nabla^2 f(x_k) - H_k\|\cdot \|d\|\\
             &\leq& \frac{L_2}{2}r_k^2\|\nabla f(x_k)\|^2+\frac{\lambda \|d\|^2 + L_VA_k\|d\|^2}{\lambda \|d\|^2 + \sigma_VA_k\|d\|^2 + \<H_kd,d>}\|\nabla f(x_k)\| \\
             &+& \|\nabla^2 f(x_k) - H_k\|\cdot r_k\|\nabla f(x_k)\|^{1/2}\\
             &\leq& \left(\xi + \frac{\lambda \|d\|^2 + L_VA_k\|d\|^2}{\lambda \|d\|^2 + \sigma_VA_k\|d\|^2 + \<H_kd,d>}\right)\|\nabla f(x_k)\|\\
             &\leq& (\xi + 1)\|\nabla f(x_k)\|.
        \end{eqnarray*}
        To finish the proof we have to notice, that if $\xi \leq \tfrac{\sqrt{5}-1}{2}$, then $\xi + 1 \leq 1/\xi$.
    \end{proof}

\begin{theorem}\textup{(Theorem \ref{th::convex})}
   Let Assumptions \ref{as:conv}, \ref{as:heslip}, \ref{as:sublevel}, \ref{as:rho} be satisfied, $H_k \succcurlyeq (L_V-\sigma_V)A_k\cdot I$ and $\xi \leq \tfrac{\sqrt{5}-1}{2}$. Then, to obtain $\e$-approximate zero-order stationary point, we need to perform 
    \[T \leq 2\sqrt{\frac{4D^3}{\e\eta^2}} + \log_{1/\xi}\frac{\|\nabla f(x_0)\|D}{\e}~~~\text{iterations}.\]
\end{theorem}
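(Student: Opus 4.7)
The plan is to combine two ingredients already prepared in the paper: Lemma \ref{lem:conv_size}, which bounds the number of function-value-decreasing iterations, and Lemma \ref{lem:conv_g}, which under the extra hypotheses $H_k\succcurlyeq(L_V-\sigma_V)A_k I$ and $\xi\le(\sqrt{5}-1)/2$ gives a uniform gradient-growth bound $\|\nabla f(x_{k+1})\|\le(1/\xi)\|\nabla f(x_k)\|$ on every iteration. Invoking Theorem \ref{th:selection} with the prescribed $r_k,A_k$, each iteration $k<j_f$ either belongs to $\FF=\{k:f(x_{k+1})-f(x_k)\le-\eta\|\nabla f(x_k)\|^{3/2}\}$ (when the KKT multiplier $\lambda>0$) or to $\GG=\{k:\|\nabla f(x_{k+1})\|\le\xi\|\nabla f(x_k)\|\}$ (when $\lambda=0$).

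The main idea is to track the gradient norm multiplicatively across all iterations $k<j_f$, then translate to the zero-order accuracy via convexity on the bounded sublevel set. For any $T<j_f$, set $a:=|\FF\cap[0,T)|$ and $b:=|\GG\cap[0,T)|$. Each $\GG$-step multiplies $\|\nabla f\|$ by at most $\xi$ (by definition) and each $\FF$-step by at most $1/\xi$ (by Lemma \ref{lem:conv_g}), so iterating yields
\[
\|\nabla f(x_T)\|\;\le\;\xi^{b-a}\|\nabla f(x_0)\|.
\]
Convexity of $f$ combined with Assumption \ref{as:sublevel}—applicable because Lemma \ref{lem:monotone} keeps all iterates in the sublevel set $L$—gives $f(x_T)-f_*\le\<\nabla f(x_T),x_T-x_*>\le\|\nabla f(x_T)\|D$. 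Hence $T<j_f$ forces $\|\nabla f(x_T)\|>\e/D$, which through the telescoping forces $b-a<\log_{1/\xi}(\|\nabla f(x_0)\|D/\e)$.

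Now Lemma \ref{lem:conv_size} bounds $a\le|\FF|\le\sqrt{4D^3/(\e\eta^2)}$, hence $T=a+b<2a+\log_{1/\xi}(\|\nabla f(x_0)\|D/\e)\le 2\sqrt{4D^3/(\e\eta^2)}+\log_{1/\xi}(\|\nabla f(x_0)\|D/\e)$. Any $T$ at least as large therefore satisfies $T\ge j_f$, giving the advertised bound.

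The delicate point I anticipate is applying Lemma \ref{lem:conv_g} cleanly: its proof hinges on the positive-definiteness hypothesis $H_k\succcurlyeq(L_V-\sigma_V)A_k I$, which together with the KKT identity lower-bounds $\<\nabla f(x_k),d_k>$ and ultimately yields the contraction factor $(\xi+1)\le 1/\xi$ (valid exactly because $\xi\le(\sqrt{5}-1)/2$). The telescoping itself is order-independent—each iteration contributes its own multiplicative factor regardless of its neighbors—so no interleaving assumption is required, and the remaining work is bookkeeping.
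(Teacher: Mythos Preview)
Your proposal is correct and follows essentially the same route as the paper's own proof: telescope the gradient norm using the factor $\xi$ on $\GG$-steps and the factor $1/\xi$ from Lemma~\ref{lem:conv_g} on $\FF$-steps, bound $|\FF|$ via Lemma~\ref{lem:conv_size}, and convert to the zero-order criterion through convexity and the sublevel-set diameter $D$. The only differences are cosmetic---you make the order-independence of the telescoping and the use of Lemma~\ref{lem:monotone} to stay in the sublevel set more explicit than the paper does.
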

    \begin{proof}
        From Lemma \ref{lem:conv_g}, we get
        \[\|\nabla f(x_T)\| \leq \left(\frac{1}{\xi}\right)^{|\FF|}\xi^{|\GG|}\|\nabla f(x_0)\|.\]
        According to Lemma \ref{lem:conv_size}, $|\FF| \leq \sqrt{\frac{4D^3}{\e\eta^2}}$. Hence, $|\GG| \geq T - \sqrt{\frac{4D^3}{\e\eta^2}}$. Using the fact, that $\xi < 1$, we have
        \[\|\nabla f(x_T)\| \leq \left(\frac{1}{\xi}\right)^{\sqrt{\frac{4D^3}{\e\eta^2}}}\xi^{T - \sqrt{\frac{4D^3}{\e\eta^2}}}\|\nabla f(x_0)\| = \xi^{T - 2\sqrt{\frac{4D^3}{\e\eta^2}}}\|\nabla f(x_0)\|.\]

    From the convexity, we have 
    \[f(x_T) - f_* \leq \|\nabla f(x_T)\| \cdot D \leq  \xi^{T - 2\sqrt{\frac{4D^3}{\e\eta^2}}}\|\nabla f(x_0)\|\cdot D.\]
    Since $f(x_T) - f_* \geq \e$, then 
    \[T - 2\sqrt{\frac{4D^3}{\e\eta^2}} \leq \log_{\xi}\frac{\e}{\|\nabla f(x_0)\|D}\]
    and
    \[T \leq 2\sqrt{\frac{4D^3}{\e\eta^2}} + \log_{1/\xi}\frac{\|\nabla f(x_0)\|D}{\e}.\]
    \end{proof}    

\section{Valid Hessian Approximations}
In this section we examine some of the used objective functions in machine learning as well as Hessian approximations, that guarantee the bounded relative inexactness.
\begin{assumption}[star-convexity]
    The function $f : \RR^n \rightarrow \RR$ is star-convex, i.e.
    \[f(x_*) \geq f(x) + \<\nabla f(x),x_*-x>,~~~ \forall x\in\RR^n\]
\end{assumption}
\begin{assumption}[PL condition]
    The function $f : \RR^n \rightarrow \RR$ satisfy Polyak-Lojasiewicz (PL) condition, i.e.
    \[f(x)-f(x_*) \leq \frac{1}{2\mu}\|\nabla f(x)\|^2\]
\end{assumption}
\begin{lemma}
    Let the considered objective be an $l_2$ loss, i.e.
    \[f(x) = \frac{1}{2}\sum\limits_{i=1}^N \left(\phi(x,a_i)-b_i\right)^2,\]
    where $\phi(\cdot,a_i): \RR^n \rightarrow \RR$ has $L$-Lipschitz gradient.  Let the $H(x)$ be the General Gauss-Newton approximation:
    \[H(x) = \sum\limits_{i=1}^N\nabla_x \phi(x,a_i)\nabla_x \phi(x,a_i)^T.\]
    Then, we have 
    \begin{equation}
        \left\|\nabla^2 f(x) - H(x)\right\| \leq L\sqrt{2Nf(x)}.
    \end{equation}
    Additionally, if $f$ is star-convex, then,
    \[\left\|\nabla^2 f(x) - H(x)\right\| \leq L\sqrt{2N\|\nabla f(x)\|\cdot\|x-x_*\|} + L\sqrt{2Nf(x_*)}.\]
    In case $f$ satisfy PL condition, we have
    \[\left\|\nabla^2 f(x) - H(x)\right\| \leq L\|\nabla f(x)\|\sqrt{\frac{N}{\mu}} + L\sqrt{2Nf(x_*)}\]
\end{lemma}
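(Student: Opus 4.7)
The plan is to reduce all three bounds to the same underlying estimate on the single expression $\nabla^2 f(x) - H(x)$, and then vary only the way we control $f(x)$.

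The first step is a direct computation. Differentiating $f(x) = \tfrac{1}{2}\sum_{i=1}^N(\phi(x,a_i)-b_i)^2$ twice gives
\[
\nabla^2 f(x) \;=\; \sum_{i=1}^N \nabla_x\phi(x,a_i)\nabla_x\phi(x,a_i)^T \;+\; \sum_{i=1}^N \bigl(\phi(x,a_i)-b_i\bigr)\nabla_x^2\phi(x,a_i),
\]
so $\nabla^2 f(x) - H(x) = \sum_{i=1}^N(\phi(x,a_i)-b_i)\nabla_x^2\phi(x,a_i)$. By the triangle inequality and the hypothesis $\|\nabla_x^2\phi(\cdot,a_i)\|\le L$ (the operator norm of the Hessian of a function with $L$-Lipschitz gradient), this is bounded above by $L\sum_{i=1}^N|\phi(x,a_i)-b_i|$. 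Cauchy--Schwarz then yields $\sum_i|\phi(x,a_i)-b_i|\le \sqrt{N}\,\bigl(\sum_i(\phi(x,a_i)-b_i)^2\bigr)^{1/2} = \sqrt{2Nf(x)}$, which gives the first claim.

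For the star-convex refinement, I would bound $f(x)$ itself. Star-convexity gives $f(x)-f(x_*)\le \langle\nabla f(x), x-x_*\rangle \le \|\nabla f(x)\|\cdot\|x-x_*\|$ by Cauchy--Schwarz, hence $f(x)\le f(x_*)+\|\nabla f(x)\|\cdot\|x-x_*\|$. Substituting this into $L\sqrt{2Nf(x)}$ and applying the elementary subadditivity inequality $\sqrt{a+b}\le\sqrt{a}+\sqrt{b}$ (valid for $a,b\ge 0$) produces the second bound. The PL case is analogous: the PL inequality gives $f(x)\le f(x_*)+\tfrac{1}{2\mu}\|\nabla f(x)\|^2$, and again $\sqrt{a+b}\le\sqrt{a}+\sqrt{b}$ separates the two contributions to give $L\sqrt{2Nf(x_*)} + L\|\nabla f(x)\|\sqrt{N/\mu}$.

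No step is especially delicate. The only place one has to be careful is the Hessian identity, where one must track that the Gauss--Newton matrix exactly cancels the ``outer product'' piece of $\nabla^2 f$, leaving only the residual-weighted curvature term; the rest is Cauchy--Schwarz and $\sqrt{a+b}\le\sqrt{a}+\sqrt{b}$.
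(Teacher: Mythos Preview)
Your proposal is correct and essentially identical to the paper's proof: the same Hessian computation isolating the residual-weighted curvature term, the same triangle inequality plus Cauchy--Schwarz to get $L\sqrt{2Nf(x)}$, and the same use of $\sqrt{a+b}\le\sqrt{a}+\sqrt{b}$ together with the star-convexity or PL bound on $f(x)-f(x_*)$ for the refinements. The only cosmetic difference is that the paper first splits $f(x)=(f(x)-f(x_*))+f(x_*)$ under the square root and then applies star-convexity/PL to the first piece, whereas you first bound $f(x)$ and then split; the resulting inequalities are the same.
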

\begin{proof}
    Take the second derivative of the objective, therefore, we have
    \begin{eqnarray*}
        \nabla^2 f(x) = \sum\limits_{i=1}^N \nabla_x \phi(x,a_i)\nabla_x \phi(x,a_i)^T + \sum\limits_{i=1}^N(\phi(x,a_i)-b_i)\nabla^2_x\phi(x,a_i).
    \end{eqnarray*}
    With simple algebra we obtain
    \begin{eqnarray*}
        \left\|\nabla^2 f(x) - H(x)\right\| &=& \left\|\sum\limits_{i=1}^N(\phi(x,a_i)-b_i)\nabla^2_x\phi(x,a_i)\right\|\\
        &\leq& \sum\limits_{i=1}^N|\phi(x,a_i)-b_i|\cdot\|\nabla^2_x\phi(x,a_i)\| \\
        &\leq& L \sum\limits_{i=1}^N|\phi(x,a_i)-b_i|\\
        &\leq& L \left(\sum\limits_{i=1}^N\left(\phi(x,a_i)-b_i\right)^2\right)^{1/2}\sqrt{N} = L\sqrt{2Nf(x)}.
    \end{eqnarray*}
    To derive the bounds for the star-convex functions, we can add and subtract the $f(x_*)$:
    \begin{eqnarray*}
        L\sqrt{2Nf(x)} &=& L\sqrt{2N(f(x)-f(x_*)) + 2Nf(x_*)} \leq L\sqrt{2N(f(x)-f(x_*))} + L\sqrt{2Nf(x_*)}\\
        &\leq& L\sqrt{2N\|\nabla f(x)\|\cdot\|x-x_*\|} + L\sqrt{2Nf(x_*)}.
    \end{eqnarray*}
    Under PL condition we have
    \begin{eqnarray*}
        L\sqrt{2N(f(x)-f(x_*))} + L\sqrt{2Nf(x_*)} \leq L\|\nabla f(x)\|\sqrt{\frac{N}{\mu}} + L\sqrt{2Nf(x_*)}
        \end{eqnarray*}
\end{proof}
\begin{corollary}
For overparametrized star-convex functions we have
\[\left\|\nabla^2 f(x) - H(x)\right\| \leq L\sqrt{2N\|\nabla f(x)\|\cdot\|x-x_*\|},\]
or
\[\left\|\nabla^2 f(x) - H(x)\right\| \leq  L\|\nabla f(x)\|\sqrt{\frac{N}{\mu}},\]
which is what we need for the relative inexactness.
\end{corollary}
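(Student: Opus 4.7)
The plan is to derive the corollary as a direct specialization of the preceding lemma. In the overparametrized regime, the model $\phi(\cdot,a_i)$ has enough capacity to interpolate the training data, so there exists a minimizer $x_*$ with $\phi(x_*,a_i) = b_i$ for every $i$; consequently $f(x_*) = \tfrac{1}{2}\sum_i(\phi(x_*,a_i)-b_i)^2 = 0$. This is the single structural fact I will invoke, and everything else follows by substitution.

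First I will recall the star-convex bound established in the lemma,
\[
\left\|\nabla^2 f(x) - H(x)\right\| \leq L\sqrt{2N\|\nabla f(x)\|\cdot\|x-x_*\|} + L\sqrt{2Nf(x_*)},
\]
and simply substitute $f(x_*) = 0$. The second summand vanishes, leaving the first claimed inequality. Next, I will do the analogous substitution in the PL-condition bound from the lemma,
\[
\left\|\nabla^2 f(x) - H(x)\right\| \leq L\|\nabla f(x)\|\sqrt{\tfrac{N}{\mu}} + L\sqrt{2Nf(x_*)},
\]
where again the residual term drops out and only $L\|\nabla f(x)\|\sqrt{N/\mu}$ remains. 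Each of the two resulting inequalities is of the form $\|\nabla^2 f(x)-H(x)\| \leq M\|\nabla f(x)\|^{\beta}$ with $\beta = 1/2$ (using Assumption \ref{as:sublevel} to bound $\|x-x_*\| \leq D$ in the first case) or $\beta = 1$ (in the PL case), which is exactly the \textit{relative inexactness} regime with $\beta \geq 1/2$ identified in Section \ref{sec:adapt_reg}.

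There is no real obstacle here: the corollary is essentially a bookkeeping consequence of setting $f(x_*)=0$ in the lemma. The only subtlety worth flagging is the precise meaning of ``overparametrized''. I will state explicitly that by overparametrized I mean the interpolation assumption $f(x_*) = 0$, which is the standard interpretation in the modern machine learning literature and is equivalent, in the sum-of-squares setting, to the existence of a parameter vector fitting all residuals to zero. With this convention the two inequalities follow immediately, and both fit the framework required for Theorems \ref{th::nonconvex} and \ref{th::convex} to yield the fast rates.
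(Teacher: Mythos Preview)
Your proposal is correct and matches the paper's intended argument: the corollary is stated without proof in the paper, and the only way to read it is exactly as you do---overparametrization means interpolation, hence $f(x_*)=0$, and the trailing $L\sqrt{2Nf(x_*)}$ term in each bound from the preceding lemma disappears. Your additional remark identifying the resulting exponents $\beta=1/2$ and $\beta=1$ is precisely the content of the phrase ``which is what we need for the relative inexactness.''
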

\begin{lemma}
    Let the considered objective be a softmax loss, i.e.
    \[f(x) = -\sum\limits_{i=1}^N \log\frac{e^{\phi_{b_i}(x,a_i)}}{\sum_{j=1}^c e^{\phi_{b_j}(x,a_i)}},\]
    where $\phi(\cdot,a_i): \RR^n \rightarrow \RR^c$, and each of its component $\phi_i$ has $L$-Lipschitz gradient.  Let the $H(x)$ be the General Gauss Newton approximation:
    \[H(x) = -\sum\limits_{i=1}^N\left[\text{J}_x\phi(x,a_i)\right]^T\left[\frac{e^{\phi_{b_k}(x,a_k)}\cdot 1[k=l]}{\sum_{j=1}^c e^{\phi_{b_j}(x,a_i)}} - \frac{e^{\phi_{b_k}(x,a_k)}e^{\phi_{b_l}(x,a_l)}}{\left(\sum_{j=1}^c e^{\phi_{b_j}(x,a_i)}\right)^2}\right]_{k,l} \left[\text{J}_x\phi(x,a_i)\right]\]
    Then, we have 
    \begin{equation}
        \left\|\nabla^2 f(x) - H(x)\right\| \leq 2Lf(x).
    \end{equation}
    Additionally, if $f$ is star-convex, then,
    \[\left\|\nabla^2 f(x) - H(x)\right\| \leq 2L\|\nabla f(x)\|\cdot\|x-x_*\| + 2Lf(x_*).\]
    In case $f$ satisfy PL condition, we have
    \[\left\|\nabla^2 f(x) - H(x)\right\| \leq \frac{L}{\mu}\|\nabla f(x)\|^2 + 2Lf(x_*)\]
\end{lemma}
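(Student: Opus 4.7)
The plan is to mirror the structure of the $\ell_2$-loss lemma: compute the true Hessian, identify the residual $\nabla^2 f - H$ as the part that involves second derivatives of $\phi$, bound it via $L$-Lipschitzness, and then control the resulting sum of ``softmax errors'' by the objective value $f(x)$.

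First, I would expand the loss and its derivatives. Writing $p_j(x,a_i) = e^{\phi_j(x,a_i)}/\sum_k e^{\phi_k(x,a_i)}$, differentiation gives $\nabla f(x) = \sum_i \sum_j (p_j(x,a_i) - \mathbf{1}[j=b_i]) \nabla_x \phi_j(x,a_i)$, and the Hessian splits into two terms via the chain rule:
\begin{equation*}
\nabla^2 f(x) = \sum_i [\mathrm{J}_x\phi(x,a_i)]^T (\mathrm{diag}(p_i) - p_ip_i^T)[\mathrm{J}_x\phi(x,a_i)] + \sum_i \sum_j (p_j - \mathbf{1}[j=b_i])\nabla^2_x \phi_j(x,a_i).
\end{equation*}
The first term is exactly $-H(x)$ as defined in the statement (up to the sign convention used there), so the residual $\nabla^2 f(x) - H(x)$ is precisely the second summation involving $\nabla_x^2\phi_j$.

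Second, I would apply the triangle inequality together with the $L$-Lipschitz gradient assumption $\|\nabla_x^2 \phi_j(x,a_i)\| \leq L$ to obtain
\begin{equation*}
\|\nabla^2 f(x) - H(x)\| \leq L\sum_i \sum_j |p_j(x,a_i) - \mathbf{1}[j=b_i]|.
\end{equation*}
The key algebraic observation is that $\sum_j |p_j - \mathbf{1}[j=b_i]| = (1-p_{b_i}) + \sum_{j\neq b_i} p_j = 2(1-p_{b_i})$, so the bound reduces to $2L \sum_i (1 - p_{b_i}(x,a_i))$.

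Third, I would use the elementary inequality $1 - t \leq -\log t$ for $t \in (0,1]$ with $t = p_{b_i}(x,a_i)$, which yields $\sum_i (1 - p_{b_i}) \leq -\sum_i \log p_{b_i} = f(x)$. This gives the first claim $\|\nabla^2 f(x) - H(x)\| \leq 2L f(x)$. For the star-convex refinement, I would write $f(x) = (f(x) - f(x_*)) + f(x_*)$ and apply star-convexity plus Cauchy--Schwarz: $f(x) - f(x_*) \leq \langle \nabla f(x), x-x_*\rangle \leq \|\nabla f(x)\|\cdot\|x-x_*\|$. The PL variant is immediate from $f(x) - f(x_*) \leq \tfrac{1}{2\mu}\|\nabla f(x)\|^2$.

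The main obstacle, such as it is, is just the bookkeeping in step one: carefully matching the residual term of $\nabla^2 f$ to the expression for $H(x)$ given in the statement (which uses the index notation $b_k,b_l$ somewhat loosely). Once the cancellation of the ``positive definite'' Gauss--Newton part is verified, the remaining analytic step $1 - p_{b_i} \leq -\log p_{b_i}$ is what makes the whole bound work, and everything else is routine.
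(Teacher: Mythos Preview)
Your proposal is correct and follows essentially the same argument as the paper: the chain-rule decomposition of $\nabla^2 f$, the identification of the residual with the second-derivative terms of $\phi$, the computation $\sum_j |p_j - \mathbf{1}[j=b_i]| = 2(1-p_{b_i})$, and the key inequality $1-t \leq -\log t$ are all exactly the steps the paper uses. The star-convex and PL refinements are handled identically as well.
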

\begin{proof}
For the ease of notation we denote the 
$$p(b_i|\phi(x,a_i)) = \frac{e^{\phi_{b_i}(x,a_i)}}{\sum_{j=1}^c e^{\phi_{b_j}(x,a_i)}}.$$
Therefore, computing a Hessian, we get
\begin{eqnarray*}
    \nabla^2 f(x) &=& -\sum\limits_{i=1}^N\left[\text{J}_x\phi(x,a_i)\right]^T \nabla^2_\phi\left(-\log p(b_i|\phi(x,a_i)\right)\left[\text{J}_x\phi(x,a_i)\right] \\
    &&- \sum\limits_{i,j}\left[\nabla_f \log p(b_i|\phi(x,a_i))\right]_j \nabla_x^2 \phi_{j}(x,a_i)
\end{eqnarray*}
Therefore, 
\begin{eqnarray*}
-\nabla_{\phi} \log p(b_i|\phi(x,a_i)) &=&\nabla_\phi\left(-\phi_{b_i} + \left(\log\sum\limits_{j=1}^ce^{\phi_j(x,a_i)}\right)\right) \\
&=& \left(\frac{e^{\phi_1}}{\sum_j e^{\phi_j}},\frac{e^{\phi_2}}{\sum_j e^{\phi_j}}, \ldots,\frac{e^{\phi_i}}{\sum_j e^{\phi_j}} - 1 ,\ldots \right)^T 
\end{eqnarray*}
and
\[-\nabla^2_{\phi_l\phi_k}\nabla \log p(b_i|\phi(x,a_i)) = -\frac{e^{\phi_l}e^{\phi_k}}{\left(\sum_j e^{\phi_j}\right)^2}\]
\[-\nabla^2_{\phi_k^2}\nabla \log p(b_i|\phi(x,a_i)) = \frac{e^{\phi_k}}{\sum_j e^{\phi_j}} - \frac{e^{2\phi_k}}{\left(\sum_j e^{\phi_j}\right)^2}.\]
After all, we have 
\begin{eqnarray*}
    \left\|\nabla^2 f(x) - H(x)\right\|&=& \left\|\sum\limits_{i=1}^N\sum\limits_{j=1}^c \frac{\partial \log p(b_i|\phi(x,a_i))}{\partial \phi_j}\nabla^2\phi_j(x,a_i)\right\| \\
    &\leq& L\sum\limits_{i=1}^N\sum\limits_{j=1}^c\left|\frac{\partial \log p(b_i|\phi(x,a_i))}{\partial \phi_j}\right|\\
    &=& L\sum\limits_{i=1}^N\|\nabla_\phi \log p(b_i|x,a_i)\|_1 \\
    &=& L\sum\limits_{i=1}^N\left(\sum\limits_{j\neq b_i}\frac{e^{\phi_{j}(x,a_i)}}{\sum_k e^{\phi_k(x,a_i)}} + 1 - \frac{e^{\phi_{b_i}(x,a_i)}}{\sum_k e^{\phi_k(x,a_i)}}\right)\\
    &=& 2L\sum\limits_{i=1}^N\left(1 - \frac{e^{\phi_{b_i}(x,a_i)}}{\sum_k e^{\phi_k(x,a_i)}}\right)
\end{eqnarray*}
Since $1-x \leq -\log x$, we may derive
\begin{eqnarray*}
    \left\|\nabla^2 f(x) - H(x)\right\| \leq -2L\sum\limits_{i=1}^N\log\frac{e^{\phi_{b_i}(x,a_i)}}{\sum_k e^{\phi_k(x,a_i)}} = 2Lf(x).
\end{eqnarray*}
To derive the bounds for star-convex functions, we can add and subtract the $f(x_*)$:
\[2Lf(x) = 2L(f(x)-f(x_*)) + 2Lf(x_*) \leq 2L\|\nabla f(x)\|\cdot\|x-x_*\| + 2Lf(x_*).\]
Under PL condition we have
\[2L(f(x) - f(x_*)) + 2Lf(x_*) \leq \frac{L}{\mu}\|\nabla f(x)\|^2 + 2Lf(x_*)\]
\end{proof}
\begin{corollary}
For overparametrized star-convex functions we have
\[\left\|\nabla^2 f(x) - H(x)\right\| \leq 2L\|\nabla f(x)\|\cdot\|x-x_*\|,\]
or
\[\left\|\nabla^2 f(x) - H(x)\right\| \leq \frac{L}{\mu}\|\nabla f(x)\|^2,\]
which is what we need for the relative inexactness.
\end{corollary}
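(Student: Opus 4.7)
The plan is to derive the corollary directly from the two bounds already established in the preceding softmax lemma, with the single new ingredient being the defining property of overparametrization. The lemma gave
\[\|\nabla^2 f(x) - H(x)\| \leq 2L\|\nabla f(x)\|\cdot\|x-x_*\| + 2Lf(x_*)\]
in the star-convex case and the analogous bound with $\frac{L}{\mu}\|\nabla f(x)\|^2$ in place of $2L\|\nabla f(x)\|\cdot\|x-x_*\|$ under the PL condition. Both inequalities share the same residual term $2L f(x_*)$, so the corollary follows the moment this term can be shown to vanish.

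First I would make precise what overparametrization means here. For softmax loss the infimum is $0$, achieved (or approached) exactly when the model assigns probability one to the correct label on every training example; in the overparametrized regime one assumes such an interpolating parameter $x_*$ exists, so that $f(x_*) = 0$. Plugging $f(x_*)=0$ into the two inequalities of the lemma is all that is needed: the additive constants disappear and the two displayed bounds of the corollary drop out with no further calculation.

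I would then close the loop with the \textit{relative inexactness} framework of Section \ref{sec:adapt_reg}. The first bound $2L\|\nabla f(x)\|\cdot\|x-x_*\|$ fits $\|\nabla^2 f(x) - H(x)\| \leq M\|\nabla f(x)\|^{\beta}$ with $\beta = 1$ once we use Assumption \ref{as:sublevel} to bound $\|x-x_*\| \leq D$ on the sublevel set; the second bound already has the form $\frac{L}{\mu}\|\nabla f(x)\|^2$, i.e.\ $\beta = 2$. Since both exponents satisfy $\beta \geq 1/2$, we land in the best of the three regimes classified in Section \ref{sec:adapt_reg}, so $\Delta$ stays bounded and Theorems \ref{th::nonconvex}--\ref{th::convex} apply to \textbf{HAT} on overparametrized softmax problems, which is precisely the take-away advertised in the statement.

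The only subtle point, and the one I expect to be the main obstacle, is justifying $f(x_*) = 0$ rigorously. For cross-entropy the infimum is generally approached as logits diverge rather than attained at a finite minimizer, so strictly speaking one may not have a true $x_*$. My plan is to handle this by either (i) assuming the interpolating parameter is attained, which is standard in the overparametrized ML literature, or (ii) stating the corollary with $f(x_*)$ replaced by $f_* := \inf_x f$ and running the same argument with $f_* = 0$. Either formulation preserves the structure of the reduction and yields the stated inequalities without additional work.
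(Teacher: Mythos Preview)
Your proposal is correct and matches the paper's (implicit) reasoning: the corollary is stated without proof in the paper because it is immediate from the preceding lemma once overparametrization forces $f(x_*)=0$, which kills the residual $2Lf(x_*)$ term in both bounds. Your additional remarks tying the resulting exponents $\beta=1,2$ back to the relative-inexactness classification, and your caveat about attainment of the infimum for cross-entropy, go beyond what the paper spells out but are consistent with it.
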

\textbf{Discussion and other examples.}

All the deterministic approaches, where the inexactness level is controllable, are valid to our approach, for instance, incorporating finite differences \cite{nocedal1999numerical, doikov2023first}. In these methods, we initialize the inexactness level $\e$ as $\|\nabla f(x)\|^{\beta}$. We believe, that stochastic approximations \cite{roosta2019sub, pilanci2017newton, agarwal2017second},  are also valid, at least in nonconvex scenario. However, corresponding convergence analysis needs to be derived at first.

\section{Additional numerical results}
\label{sec:add_numerical_results}

\paragraph{Additional relative inexactness experiments.}
We also present a new empirical result regarding relative inexactness $\Delta$.
We show that $\Delta$ remains bounded during training of a simple MLP model while using the SR-1 \cite{Conn1991ConvergenceOQ} and BFGS \cite{nocedal1999numerical} approximations.
Thus, we extend our results from Section \ref{sec:numerical_results} to other types preconditioning matrices.
This result is depicted in Figure \ref{fig:ri_extra}.

\begin{figure}[H]
    \subfloat{%
      \includegraphics[width=0.45\linewidth]{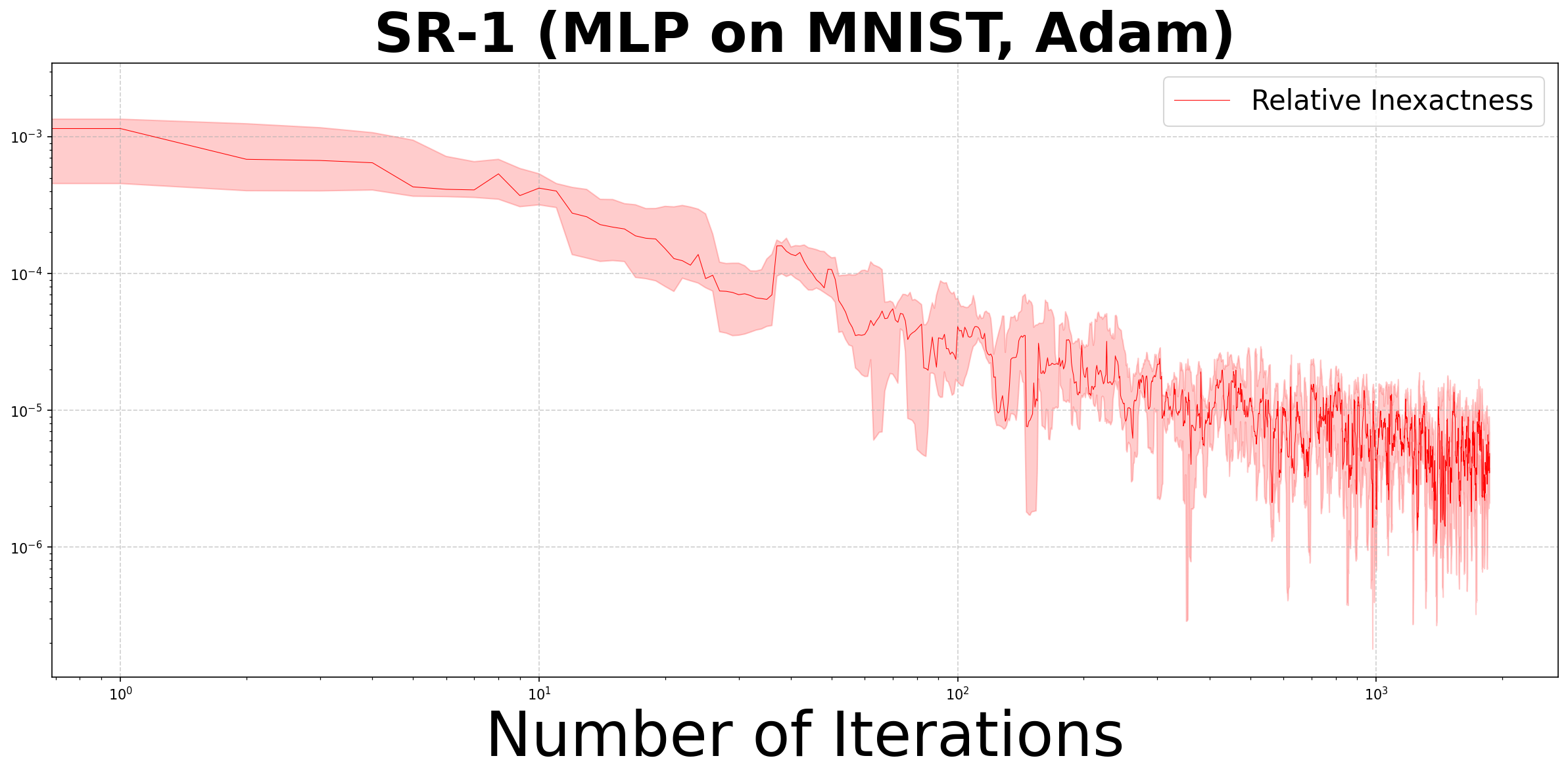}}%
    \hfill
    \subfloat{%
      \includegraphics[width=0.45\linewidth]{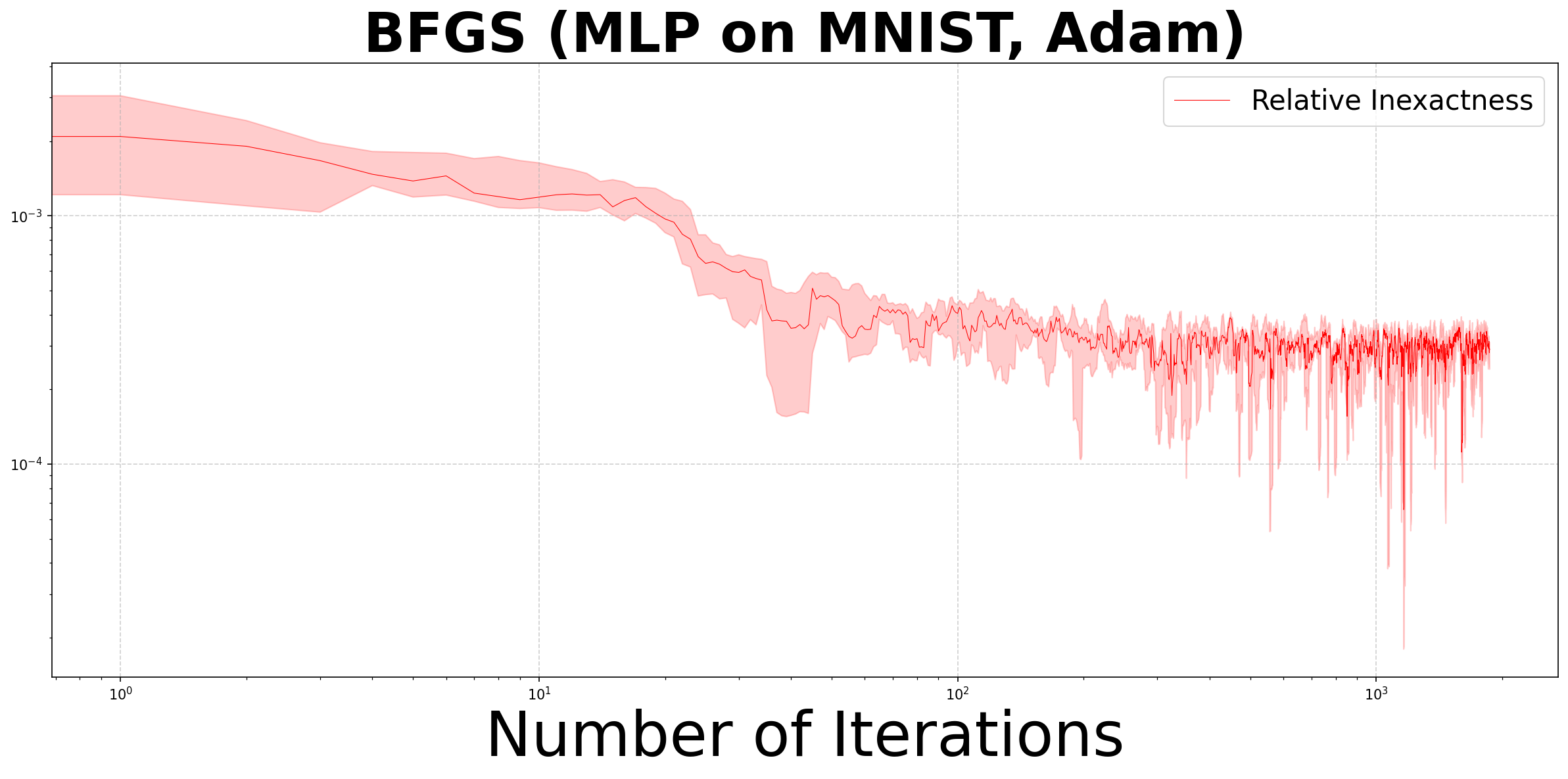}}%
  \caption{$\Delta$ evolution during Full training on MNIST.}\label{fig:ri_extra}
\end{figure}

\paragraph{HAT performance.}
We provide additional comparison between second-order, approximate second-order and first order methods on \texttt{mushrooms} dataset.
We included in this comparison: Adam \cite{kingma2014adam}, Broyden \cite{Broyden1965ACO}, DFP \cite{Davidon1959VariableMM,Fletcher1988PracticalMO}, BFGS \cite{nocedal1999numerical}, KFAC \cite{martens2015optimizing}. The results can be found in Figure \ref{fig:hat_mushrooms}

\begin{figure}[H]  
    \subfloat{%
      \includegraphics[width=0.45\linewidth]{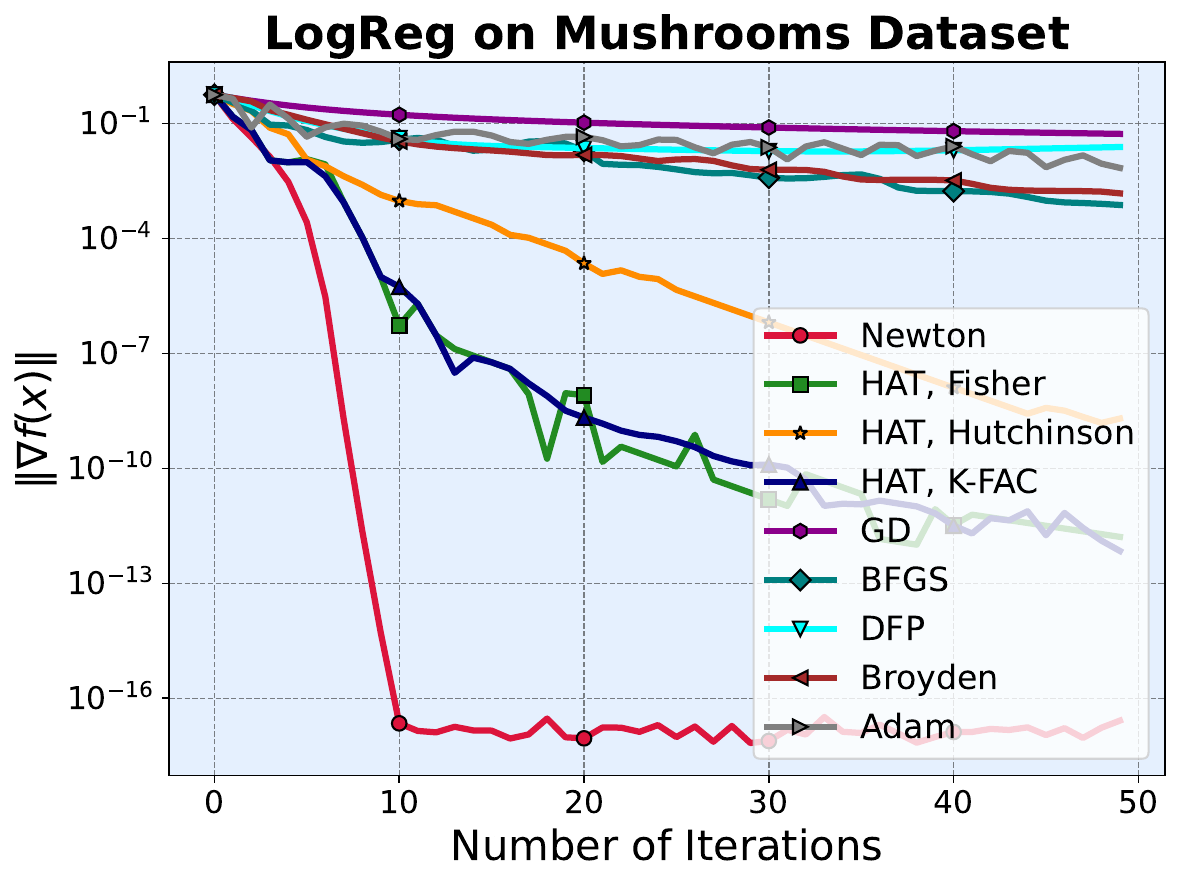}}%
    \hfill
    \subfloat{%
      \includegraphics[width=0.45\linewidth]{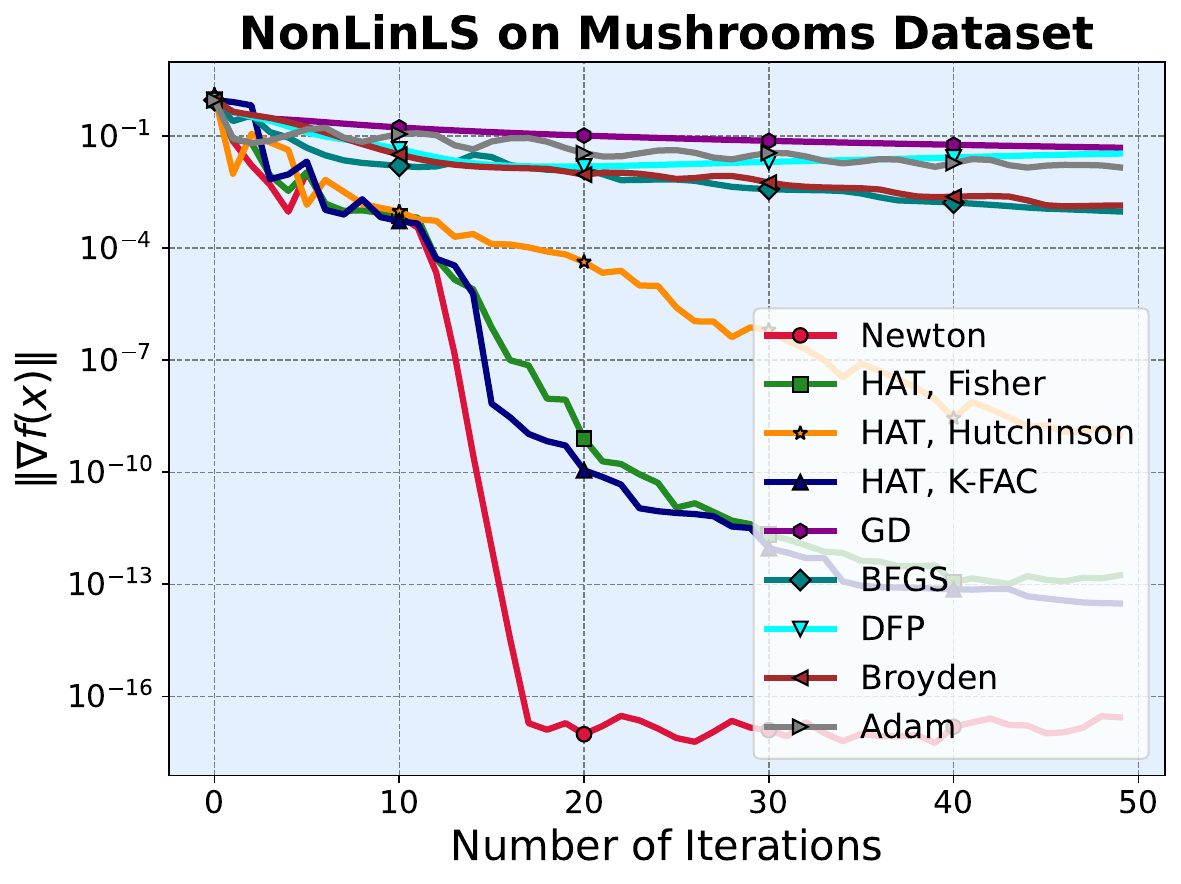}}%
\caption{Comparison of \textbf{HAT} with different methods on  \texttt{mushrooms} dataset}\label{fig:hat_mushrooms}
\end{figure}
Also, we show the convergence of HAT on highly nonconvex problem -- the Rosenbrock function $f(x_1, x_2) = (1 - x_1)^2 + 100(x_2 - x_1^2)^2$.
The result is in Figure \ref{fig:hat_nonconvex}. \textbf{HAT} obtains significantly better performance on such highly nonconvex problem, compared to other methods.
\begin{figure}[H]
    \centering
  \subfloat{%
    \includegraphics[width=0.45\linewidth]{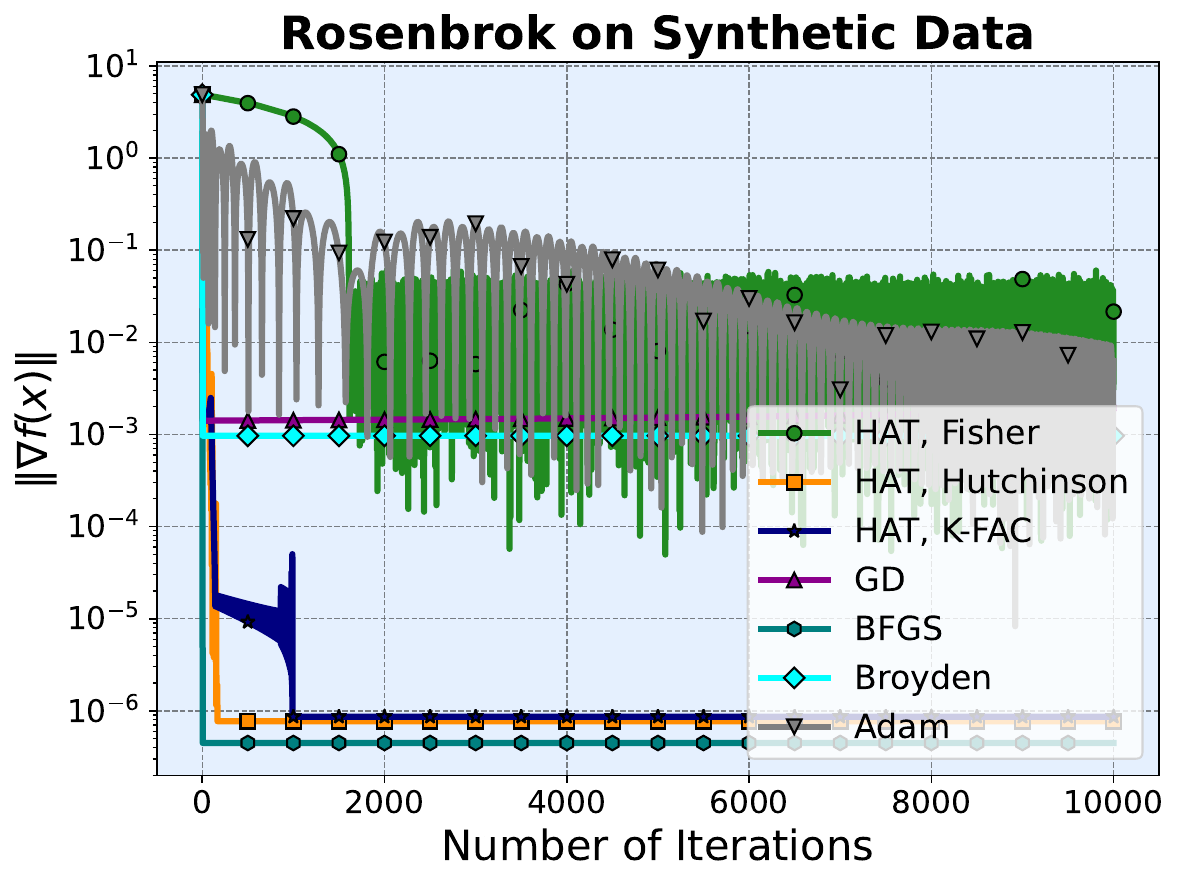}%
  }
  \caption{Comparison of \textbf{HAT} with different methods on Rosenbrock function}\label{fig:hat_nonconvex}
\end{figure}

\end{appendixpart}
\end{document}